\documentclass[1pt]{article}
\usepackage{amsmath, amssymb, amsthm, amsfonts, cases}
\usepackage{mathrsfs}
\usepackage{url}
\usepackage{authblk}
\usepackage[usenames]{color}
\usepackage{geometry}
\geometry{left=1cm,right=1cm,top=1.5cm,bottom=1.5cm}

\allowdisplaybreaks

\theoremstyle{plain}
\newtheorem{thm}{Theorem}[section]

\newtheorem{pro}[thm]{Problem}
\newtheorem{lem}[thm]{Lemma}

\theoremstyle{definition}
\newtheorem{defn}{Definition}[section]
\newtheorem{ass}{Assumption}[section]
\newtheorem{rmk}{Remark}[section]

\newcommand{\eps}{\varepsilon}

\newcommand{\su}{\sum_{i=1}^\infty}
\newcommand{\la}{\langle}
\newcommand{\ra}{\rangle}

\makeatletter\@addtoreset{equation}{section} \makeatother
\begin{document}

\title{  Stochastic Evolution Equation Driven by Teugels Martingale and Its Optimal Control
 \thanks{This work was supported by the Natural Science Foundation of Zhejiang Province
for Distinguished Young Scholar  (No.LR15A010001),  and the National Natural
Science Foundation of China (No.11471079, 11301177) }}

\date{}

   \author{ Qingxin Meng\thanks{Corresponding author.   E-mail: mqx@zjhu.edu.cn}  \hspace{1cm}
   Qiuhong Shi
 \hspace{1cm}  Maoning Tang 
\hspace{1cm}
\\\small{Department of Mathematics, Huzhou University, Zhejiang 313000, China}}

\maketitle
\begin{abstract}

The paper is concerned with a class of stochastic evolution
equations in Hilbert space with random coefficients driven by Teugel's martingales and
an independent multi-dimensional Brownian motion and its optimal control problem. Here Teugels
martingales are a family of pairwise strongly orthonormal
martingales associated with L\'{e}vy processes (see Nualart and
Schoutens \cite{NuSc}). 
 There are
three major ingredients. The first is to
prove
the existence and uniqueness of the solutions by continuous dependence theorem of solutions combining with  the  parameter extension method. The second is
to establish the stochastic maximum principle
and verification theorem for our optimal control problem by the classic
convex variation method and dual technique.
The third is to represent an example of   a Cauchy problem for a controlled stochastic partial differential equation driven by  Teugels martingales which our theoretical results can solve.

\end{abstract}

\textbf{Keywords}: Stochastic Evolution Equation;  Teugels Martingales; Optimal Control; Stochastic Maximum Principle; Verification Theorem

\maketitle

\section{ Introduction }

This  paper is concerned with  the  following
stochastic evolution equation in Hilbert
space  \begin{eqnarray} \label{eq:1.1}
  \left\{
  \begin{aligned}
   d X (t) = & \ [ A (t) X (t) + b ( t, X (t)) ] d t
+ [B(t)X(t)+g( t, X (t)) ]d W(t)
 + \su\sigma^i(t, X(t-)) dH^i(t),  \\
X (0) = & \  x , \quad t \in [ 0, T ]
  \end{aligned}
  \right.
\end{eqnarray}
in the framework of a Gelfand triple $V \subset H= H^*\subset V^{*},$  where $ H$ and  $V$ are
two given Hilbert spaces. Here on  a given  filtrated probability space  $(\Omega, \mathscr{F},\{
{\mathscr F}_t\}_{0\leq t\leq T}, P),$ $W$  is  a  one-dimensional  Brownian motion  and  $\{H^i(t), 0\leq t \leq T\}_{i=1}^\infty$ is
is Teugels martingale associated with 
a one-dimensional L\'{e}vy process $\{L(t),
0\leq t \leq T\}$,
$A:[0,T]\times \Omega \longrightarrow {\mathscr L} (V, V^*)$, $B
  : [0,T]\times \Omega \longrightarrow {\mathscr L} (V, H ),$ $b:[0,T]\times  \Omega\times H  \longrightarrow H$, $g:[0,T]\times\Omega\times H  \longrightarrow H$ and $\sigma^i:[0,T]\times  \Omega \times
   E\times H  \longrightarrow H$  are given random mappings. Here we denote by  $\mathscr{L}(V,V^*)$ the space of bounded
linear transformations of V into $V^*$, by ${\mathscr L} (V, H)$   the space of bounded
linear transformations of  $H$ into $V.$
    An adapted solution of
   \eqref{eq:1.1} is  a $V$-valued, $\{{\mathscr F}_t\}_{0\leq t\leq T}$-adapted process $X(\cdot)$  which satisfies \eqref{eq:1.1} under some appropriate sense.
    Such a model as \eqref{eq:1.1}
    represents a large  classes of stochastic partial differential equations, for instance  the nonlinear filtering equation and other stochastic parabolic PDEs (cf.\cite{Chow07}), but it is
    by no means the largest one. Partial
    differential equation are too diverse to be covered by a single model, like ordinary equations.

In 2000, Nualart and Schoutens \cite{NuSc}
got a martingale representation theorem for a type of L\'{e}vy
processes through Teugels martingales which
are a family of pairwise strongly orthonormal martingales associated
with L\'{e}vy processes. Later, they proved in \cite{NuSc01} the
existence and uniqueness theory of BSDE driven by Teugels
martingales. The above results are further extended to the
one-dimensional BSDE driven by Teugels martingales and an
independent multi-dimensional Brownian motion by Bahlali et al
\cite{BEE03}. One can refer to \cite{Otm06, Otm08, ReFa09, ReOt10}
for more results on such kind of BSDEs. 

In the mean time, the stochastic optimal control problems related to
Teugels martingales were studied. In 2008, a stochastic
linear-quadratic problem with L\'{e}vy processes was considered by
Mitsui and Tabata \cite{MiTa08}, in which they established the
closeness property of a multi-dimensional backward stochastic Riccati
differential equation (BSRDE) with Teugels martingales and proved
the existence and uniqueness of the solution to such kind of
one-dimensional BSRDE, moreover, in their paper an application of
BSDE to a financial problem with full and partial observations was
demonstrated. Motivated by \cite{MiTa08}, Meng and Tang
\cite{MeTa08} studied the general stochastic optimal control problem
for the forward stochastic systems driven by Teugels martingales
and an independent multi-dimensional Brownian motion, of which the
necessary and sufficient optimality conditions in the form of
stochastic maximum principle with the convex control domain are
obtained. In 2012, Tang and Zhang \cite{TaZh10} studied the optimal
control problem  of backward stochastic systems driven by Teugels
martingales and an independent multi-dimensional Brownian motion and
obtained the corresponding stochastic maximum principle.

Due to the
interesting analytical contents  and
wide applications in various
sciences such as physics,
mechanical engineering, control
theory and economics,  the theory of
SPDEs  driven by Wiener processes or
 Gaussian random processes now has
been investigated extensively  and
has already achieved fruitful results on
the existence  uniqueness, stability,
invariant measure   other quantitative and
qualitative properties of solution and so on.
 There are a great amount of literature on this topic, for example  \cite{Chow07}\cite{Da}\cite{Ro} and references therein. On the one hand, non-Gaussian random processes play an increasing role in modeling stochastic dynamical systems. Typical examples of non-Gaussian stochastic processes are L\'{e}vy processes and processes arising by Poisson random measures. In neurophysiology the driving noise of the cable equation is basically impulsiv  e.g., of a Poisson type (see \cite{Wal} ) or, on the other hand, Woyczy\'{n}ski describes in
 \cite{woy} a number of phenomena from fluid mechanics, solid state physics, polymer chemistry, economic science  etc., for which non-Gaussian L\'{e}vy processes can be used as their mathematical model in describing the related stochastic behavior. Thus, from the point of view of applications one might feel that the restriction to Wiener processes or Gaussian noise is unsatisfactory; to handle such cases one can replace Wiener processes or Gaussian noise by a Poisson random measure.
Most recently, thanks to comprehensive practical applications,  many attentions have been paid to SPDEs driven by jump processes, (cf., for exampl \cite{Al}\cite{RSS}\cite{Ren}\cite{Ro}
\cite{Sa}\cite{Ya}\cite{Zhao}\cite{Zhai}
 and the references therein).    It is worth mentioning that R\"{o}cker and Zhang
\cite{Ro}  established the existence and uniqueness theory for solutions of stochastic evolution equations of type (1.1) by a successive approximations, in which case  the operator $B$ does not exist.

 One of the purposes of this paper  is to establish the continuous dependence of  the solution on the
  coefficients and  the existence and uniqueness of solutions    to  the stochastic evolution equation \eqref{eq:1.1}.
It is well known that there are
  two
different methods to analyzing SPDEs:   the semigroup (or mild solution ) approach (cf. \cite{Da}) and the variational approach (cf.\cite{Pr}). For \eqref{eq:1.1}, since its
 coefficients are allowed to be random,
  we need to use
  the variational approach in the weak solution framework (in the PDE sense) of the Gelfand triple and can not use the mild solution  approach to study it.
 In fact,  when the coefficients
 are deterministic, we always
 study the stochastic
 evolution equation in
 the mild
 solution framework.
 However, due to the randomness of
coefficients, it seems very difficult or even impossible to tackle
the problem  in the mild
solution sense.   Indeed,  if we define
the mild solution as usual, the adaptability of the integrand in the
stochastic integral may not be satisfied due to the randomness of
the operator $A$. The advantage of the variational approach is that a version of
It\^o's formula exists in the context of the Gelfand triple of
Hilbert spaces (see \cite{Gy} for details).
Such a formula will play an important role in proving the main
results throughout this paper.

 Another  purpose of this paper is to  establish the
 maximum principle and verification theorem
 for the optimal control problem where the state process is driven by a  controlled stochastic evolution equation \eqref{eq:1.1}.
 A classical approach for optimal control problems is to derive necessary conditions satisfied by an optimal control, such as Pongtryagin¡¯s maximum principle. Since the 1970s, the maximum principle has been extensively studied for stochastic control systems: in the finite-dimensional case  it has been solved by Peng \cite{Peng} in a general setting where the control was allowed to take values in a nonconvex set and enter into the diffusion, while in the infinte-dimensional case  the existing literatur  e.g.,\cite{Ben830},
 \cite{Hu} \cite{Hupeng}\cite{Zhou}, required at least one of the following three assumptions: (1) the control domain was convex, (2) the diffusion did not depend on the control, (3) the state equation and cost functional were both linear in the state variable. So far the general maximum principle for infinite-dimensional stochastic control systems, the counterpart of Peng's result, remained open for a long time.
  Until recently, Du and Meng attempt to fill  this gap in \cite{Du} where they
   developed a new procedure to perform the second-order duality analysis: by virtue of the Lebesgue differentiation theorem and an approximation argument to establish the corresponding maximum principle. Meanwhile  other  very important works concerned with the general stochastic maximum principle in infinite dimensions   were given in  \cite{Fu} \cite{Lu} besides \cite{Du}.  From the
   above references,  works on optimal control problems of infinite dimension stochastic evolution equation or stochastic partial differential equation are mainly  concerned with systems driven only by Wiener  processes. In contrast, there have not been
  a
 number of
  results on  the optimal control for stochastic partial
differential equations driven by jump processes. In 2005,  {\O}ksendal, Prosk and    Zhang \cite{Ok} studied the optimal control problem  of quasilinear semielliptic SPDEs driven by Poisson random measure   and gave sufficient maximum principle results, not necessary ones. In 2017,  Tang and Meng
 \cite{Tangmeng2016}studied the optimal control problem of more general stochastic
 evolution equations driven by Poisson random measure with random coefficients   and gave necessary and  sufficient maximum principle results.In this paper,
for a controlled  stochastic evolution equation \eqref{eq:1.1},  we suppose
 that the  control domain is convex.  We adopt the  convex variation method and the first adjoint duality analysis to show a necessary maximum principle
 where  the continuous dependence
 theorem (see Theorem \ref{thm:3.2}) plays a key role in proving the variation inequality for the cost functional
 (see Lemma \ref{lem:6.2}).  Under the convexity assumption of the Hamiltonian and the terminal cost, we provide  a sufficient maximum principle for this optimal problem which is the so-called verification theorem.
   It is worth mentioning that if the admissible control set is non-convex and  the diffusion terms of  the state equation is independent of the control variable  we can
use the first-order spike variation method to obtain the maximum principle in the global
form  by establishing  some subtle   $L^2$-estimate for the state equation. All the details shall be given in our forthcoming paper.  But for the general setting,  it seems very difficult or even impossible to obtain the corresponding
maximum principle because it seems impossible to establish  some $L^p$ $(p>2)$ estimates as in \cite{Du} for the state
  process which play a key role in
  the second variation analysis.
  Finally, to illustrate our
results, we apply the stochastic maximum principles to solve an optimal control of a Cauchy problem for
a controlled stochastic linear
partial differential equation.

The rest of this
paper is structured as follows. In Section 2,
 we provide the basic notations and recall It\^{o} formula for Teugels martingales in Hilbert space used  frequently in this paper.
 Section 3  establishes the continuous dependence and  the existence and
 uniqueness of solutions to the
 stochastic evolution equation \eqref{eq:1.1}.
 Section 4 formulates the optimal control problem specifying the hypotheses. In section 5, adjoint equation is introduced which turns out to be a backward stochastic
evolution equation driven by Teugels martingales. In Section 6, we establish the  stochastic  maximum principle by the classical convex variation method. In Sections 7, the verification theorem for optimal controls  is obtained by dual technique. In section 8, we present an application of our results.  The final section concludes the paper.

\section{Notations and It\^{o} Formula for Teugels Martingales in Hilbert Space  }

Let $(\Omega, \mathscr{F},\{\mathscr{F}_t\}_{0\leq t\leq T}, P)$ be
a complete probability space. The filtration
$\{\mathscr{F}_t\}_{0\leq t\leq T}$ is right-continuous and
generated by a one-dimensional standard Brownian motion $\{W(t),
0\leq t\leq T\}$ and a one-dimensional L\'{e}vy process $\{L(t),
0\leq t \leq T\}$. It is known that $L(t)$ has a characteristic
function of the form $$\mathbb E\big[e^{i\theta L(t)}\big]=\exp\bigg[ia\theta
t-{1\over2}\sigma^2\theta^2t+t\int_{\mathbb{R}^1}(e^{i\theta
x}-1-i\theta x I_{\{|x|<1\}})v(dx)\bigg],\quad \forall \theta\in \mathbb{R},$$ where
$a\in\mathbb{R}^1$, $\sigma>0$ and $v$ is a measure on
$\mathbb{R}^1$ satisfying (i)$\displaystyle \int_{-T}^T(1\wedge
x^2)v(dx)<\infty$ and (ii) there exists $\varepsilon >0$ and
$\lambda >0$, s.t. $\displaystyle \int_{\{-\varepsilon,
\varepsilon\}^c} e^{\lambda |x|}v(dx)<\infty$. These settings imply
that the random variables $L(t)$ have moments
of all orders. 
Denote by $\mathscr{P}$  the
predictable sub-$\sigma$ field of $\mathscr B([0, T])\times
\mathscr{F}$, then we introduce the following notation used
throughout this paper.

$\bullet$~~$X$: a Hilbert space with norm $\|\cdot\|_X$.

$\bullet$~~$(\cdot,\cdot)_X:$ the inner product in Hilbert space $X.$

$\bullet$~~$l^2$: the space of all real-valued sequences
$x=(x_n)_{n\geq 1}$ satisfying
$$\|x\|_{l^2}\triangleq\sqrt{\displaystyle \sum_{i=1}^\infty x_i^2}<+\infty.$$

$\bullet$~~$l^2(X):$ the space of all H-valued sequence
$f=\{f^i\}_{i\geq 1}$ satisfying
$$\|f\|_{l^2(X)}
\triangleq\sqrt{\displaystyle
\sum_{i=1}^\infty||f^i||_X^2}<+\infty.$$

$\bullet$~~$l_{\mathscr{F}}^2(0, T; X):$ the space of all
$l^2(X)$-valued and ${\mathscr{F}}_t$-predictable processes
$f=\{f^i(t,\omega),\ (t,\omega)\in[0,T]\times\Omega\}_{i\geq1}$
satisfying
$$\|f\|_{l_{\mathscr{F}}^2(0, T, X)}\triangleq\sqrt{E\displaystyle
\int_0^T\sum_{i=1}^\infty||f^i(t)||_X^2dt}
<\infty.$$

$\bullet$~~$M_{\mathscr{F}}^2(0,T;X):$ the space of all $X$-valued
and ${\mathscr{F}}_t$-adapted processes $f=\{f(t,\omega),\
(t,\omega)\in[0,T]\times\Omega\}$ satisfying
$$\|f\|_{M_{\mathscr{F}}^2(0,T;X)}
\triangleq\sqrt{E\displaystyle
\int_0^T\|f(t)\|_X^2dt}<\infty.$$

$\bullet$~~$S_{\mathscr{F}}^2(0,T;X):$ the space of all $X$-valued
and ${\mathscr{F}}_t$-adapted  c\`{a}dl\`{a}g processes
$f=\{f(t,\omega),\ (t,\omega)\in[0,T]\times\Omega\}$ satisfying
$$\|f\|_{S_{\mathscr{F}}^2(0,T;X)}\triangleq\sqrt{E\displaystyle\sup_{0 \leq t \leq T}\|f(t)\|_X^2dt}<+\infty.$$

$\bullet$~~$L^2(\Omega,{\mathscr{F}},P;X):$ the space of all
$H$-valued random variables $\xi$ on $(\Omega,{\mathscr{F}},P)$
satisfying
$$\|\xi\|_{L^2(\Omega,{\mathscr{F}},P;X)}\triangleq E\|\xi\|_X^2<\infty.$$

We denote by $\{H^i(t), 0\leq t \leq T\}_{i=1}^\infty$ the Teugels
martingales
associated with the L\'{e}vy process $\{L(t),0\leq t \leq T\}$. 
$H^i(t)$ is given by
$$
H^i(t)=c_{i,i}Y^{(i)}(t)+c_{i,i-1}Y^{(i-1)}(t)+\cdots+c_{i,1}Y^{(1)}(t),
$$
where $Y^{(i)}(t)=L^{(i)}(t)-E[L^{(i)}(t)]$ for all $i\geq 1$,
$L^{(i)}(t)$ are so called power-jump processes with
$L^{(1)}(t)=L(t)$, $L^{(i)}(t)=\displaystyle\sum_{0<s\leq t}(\Delta
L(s))^i$ for $i\geq 2$ and the coefficients $c_{ij}$ corresponding to
the orthonormalization of polynomials $1,x, x^2,\cdots$ w.r.t. the
measure $\mu(dx)=x^2v(dx)+\sigma^2\delta_0(dx)$. The Teugels
martingales $\{H^i(t)\}_{i=1}^\infty$ are pathwise strongly
orthogonal 
and their predictable quadratic variation processes are given by
$$\langle H^{(i)}(t), H^{(j)}(t)\rangle=\delta_{ij}t$$
For more details of Teugels martingales, we invite the
reader to consult Nualart and Schoutens
\cite{NuSc}.

Let $V$ and $H$ be two separable (real) Hilbert spaces such that $V$
is densely embedded in $H$. We identify $H$ with its dual space by
the Riesz mapping. Then  we can take $H$ as a pivot space and  get a
Gelfand triple $V \subset H= H^*\subset V^{*},$ where  $H^*$ and
$V^{*}$ denote the dual spaces of $H$ and $V$, respectively. Denote
by $\|\cdot\|_{V},\|\cdot\|_{H}$ and $\|\cdot\|_{V^*}$ the norms of
$V,H$ and $V^*$, respectively, by $(\cdot,\cdot)_H$ the inner
product in $H$, by $\la\cdot,\cdot\ra$ the duality product between
$V$ and $V^{*}$.
 Moreover we write $\mathscr{L}(V,V^*)$ the space of bounded
linear transformations of V into $V^*$.
Throughout this paper, we let  $C$ and $K$  be two generic positive constants, which may be different from line to line.

Now we present an It\^o's formula in Hilbert space
which will be  frequently  used  in this paper.

\begin{lem}\label{lem:c1}
Let $\varphi\in L^{2}(\Omega,\mathscr{F}_{0},P; H)$. Let $Y, Z, \Gamma$ and $R\equiv (R^i)_{i=1}^{\infty}$  be three
  progressively measurable stochastic processes defined on $[0,T]\times \Omega$ with values in $V,H$ and $V^{*}$ such that $ Y\in { M}_{\mathscr F}^2 (0, T; V),  Z\in {M}_{\mathscr F}^2 (0, T; H),$ $\Gamma \in { M}_{\mathscr F}^2 (0, T; V^*) $ and $R\in l_{\mathscr{F}}^2(0, T; H)$, respectively. 
 Suppose that
for every
  $\eta \in V$ and almost every $(\omega,t)\in\Omega\times[0,T]$, it holds
  that
  \begin{equation*}
    ( \eta,Y(t))_H =( \eta, \varphi)_H+
    \int_{0}^{t} \la \eta,\Gamma(s) \ra ds
  +\int_0^t( \eta, Z(s) )_HdW(s)+\sum_{i=1}^\infty\int_0^t
     ( \eta, R^i(s) )_HdH^i(s).
  \end{equation*}
  Then  $Y$ is a $H$-valued  strongly c\`{a}dl\`{a}g
  $\mathscr F_t$-adapted process 
such that  the following It\^{o} formula holds

 \begin{eqnarray}\label{eq:2.1}
   \begin{split}
     ||Y(t)||_H^2=&||\varphi||^2+
     2\int_0^t\langle \Gamma(s), Y(s) \rangle
     ds +2\int_0^t ( Y(s), Z(s))_H
     dW(s)+ \int_0^t||Z(s)||_H^2ds
     \\&+2\sum_{i=1}^\infty\int_0^t 
     (Y(s), R^i(s))_HdH^i(s)
     +\sum_{i=1}^\infty
     \sum_{j=1}^\infty\int_0^t (R^i(s), R^j(s))_Hd[H^i(s), H^j(s)].
   \end{split}
 \end{eqnarray}
\end{lem}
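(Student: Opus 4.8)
The plan is to establish \eqref{eq:2.1} by a finite-dimensional Galerkin reduction, adapting the classical It\^o formula for the Gelfand triple (cf. \cite{Gy}) to incorporate the Teugels-martingale jump terms. First I would fix a complete orthonormal system $\{e_k\}_{k\geq 1}\subset V$ of $H$ whose finite linear spans are dense in $V$ and which is compatible with the triple, in the sense that the $H$-orthogonal projections $P_n:=\sum_{k=1}^n(\cdot,e_k)_He_k$ converge strongly to the identity not only on $H$ but also on $V$, and hence, by duality, on $V^*$; when the embedding $V\hookrightarrow H$ is compact such a system is furnished by the eigenfunctions of the symmetric operator canonically associated with the triple, and in general one circumvents the need for such a basis by approximating $Y$ and $\Gamma$ by step processes in time. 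Taking $\eta=e_k$ in the hypothesis shows that each scalar coordinate $Y_k:=(e_k,Y)_H$ is a real semimartingale whose drift and Brownian parts are continuous and whose jumps are carried by the $H^i$.

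\textbf{Finite-dimensional It\^o formula.} Next I would apply the ordinary real-valued It\^o formula for semimartingales with jumps to $\|P_nY(t)\|_H^2=\sum_{k=1}^nY_k(t)^2$. Using the independence of $W$ and $L$ (so that $[W,H^i]\equiv 0$ and no mixed brackets arise) together with the identities $\sum_k Y_k\langle e_k,\Gamma\rangle=\langle P_nY,\Gamma\rangle$, $\sum_k(e_k,Z)_H^2=\|P_nZ\|_H^2$ and $\sum_k(e_k,R^i)_H(e_k,R^j)_H=(P_nR^i,P_nR^j)_H$, the summation collapses to
\begin{equation*}
\begin{split}
\|P_nY(t)\|_H^2=&\|P_n\varphi\|_H^2+2\int_0^t\langle P_nY(s),\Gamma(s)\rangle\,ds+2\int_0^t(P_nY(s),Z(s))_H\,dW(s)+\int_0^t\|P_nZ(s)\|_H^2\,ds\\
&+2\sum_{i=1}^\infty\int_0^t(P_nY(s-),R^i(s))_H\,dH^i(s)+\sum_{i=1}^\infty\sum_{j=1}^\infty\int_0^t(P_nR^i(s),P_nR^j(s))_H\,d[H^i(s),H^j(s)],
\end{split}
\end{equation*}
which is exactly \eqref{eq:2.1} read at the $n$-th level of the approximation.

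\textbf{Passage to the limit.} The core of the argument is to let $n\to\infty$ in this identity. The drift term converges to $2\int_0^t\langle Y(s),\Gamma(s)\rangle\,ds$ because $P_nY\to Y$ in $M_{\mathscr F}^2(0,T;V)$, so that the $V$--$V^*$ duality pairing, bounded by $\|P_nY-Y\|_V\|\Gamma\|_{V^*}$, passes to the limit; I expect this to be the main obstacle, since it is precisely the step that compels the variational framework and that would collapse for a projection converging only in the $H$-norm. The two martingale terms converge, after localizing by the stopping times $\tau_m=\inf\{t:\int_0^t(\|Y\|_V^2+\|Z\|_H^2+\sum_i\|R^i\|_H^2)\,ds\geq m\}$, by the It\^o isometries for $W$ and for the $H^i$ (the latter using $\langle H^i,H^j\rangle=\delta_{ij}t$) together with dominated convergence applied to $\|P_nY-Y\|_H^2\to 0$, and the $ds$-integral converges likewise. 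The double series in the last term is controlled by the same orthonormality relation, which gives $\mathbb E\sum_{i,j}\int_0^T(R^i,R^j)_H\,d[H^i,H^j]=\mathbb E\int_0^T\sum_i\|R^i\|_H^2\,ds<\infty$ under the hypothesis $R\in l_{\mathscr F}^2(0,T;H)$, so the jump quadratic-variation term is well defined and its projected version converges in $L^1$.

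\textbf{Regularity upgrade.} Finally I would read off the c\`{a}dl\`{a}g regularity from the limit identity. Testing the hypothesis against each $\eta\in V$ shows that $t\mapsto(\eta,Y(t))_H$ admits a c\`{a}dl\`{a}g modification, so $Y$ is weakly c\`{a}dl\`{a}g in $H$; the limiting identity simultaneously exhibits $t\mapsto\|Y(t)\|_H^2$ as a c\`{a}dl\`{a}g process. Since weak convergence together with convergence of norms implies strong convergence in the Hilbert space $H$, these two facts combine to produce a strongly c\`{a}dl\`{a}g, $\mathscr F_t$-adapted $H$-valued modification of $Y$, for which \eqref{eq:2.1} then holds for every $t$ once the localization is removed by letting $m\to\infty$.
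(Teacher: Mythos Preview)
Your proposal is correct and follows exactly the approach the paper intends: the paper's own proof consists of the single sentence ``The proof follows that of Theorem 1 in Gy\"{o}ngy and Krylov \cite{Gy},'' and your Galerkin reduction, finite-dimensional It\^o formula, and passage to the limit is precisely that argument, extended to accommodate the Teugels-martingale jump terms. You have in fact supplied considerably more detail than the paper does, including a correct treatment of the jump quadratic-variation term and the weak-plus-norm argument for the strong c\`{a}dl\`{a}g modification.
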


\begin{proof}
  The proof follows that of Theorem 1 in Gy\"{o}ngy and Krylov [13].
\end{proof}

\section{Stochastic Evolution Equation 
driven by Teugels Martingales}

In this section, we present some
 preliminary results of the following  stochastic evolution equation (SEE  for short) driven by Brownian motion $\{W(t),
0\leq t\leq T\}$  and  Teugels Martingales $\{H^i(t), 0\leq t \leq T\}_{i=1}^\infty$:

\begin{eqnarray} \label{eq:3.1}
  \left\{
  \begin{aligned}
   d X (t) = & \ [ A (t) X (t) + b ( t, X (t)) ] d t
+ [B(t)X(t)+g( t, { X (t)}) ]d W(t)
 + \sum_{i=1}^\infty\sigma^i(s,X(s-))dH^i(s),  \\
X (0) = & \  x \in H , \quad t \in [ 0, T ],
  \end{aligned}
  \right.
\end{eqnarray}
where $A,B,b,g$ and  $\sigma\equiv{(\sigma^i)}_{i=1}^\infty$ are given random mappings
which satisfy the following
standard  assumptions.

\begin{ass} \label{ass:3.1}
  The operator processes $A:[0,T]\times \Omega \longrightarrow {\mathscr L} (V, V^*)$ and $B
  : [0,T]\times \Omega \longrightarrow {\mathscr L} (V, H)$
  are weakly predictable; i.e.,
  $ \langle A(\cdot)x, y \rangle$ and $(B(\cdot)x, y)_H$
  are both predictable process for every $x, y\in V, $
  and satisfy the coercive condition, i.e.,  there exist
  some constants  $ C, \alpha>0$ and $\lambda$ such that for any $x\in V$ and  each $(t,\omega)\in [0,T]\times \Omega,$
    \begin{eqnarray}
    \begin{split}
     - \langle A(t)x, x \rangle +\lambda ||x||_H^2\geq \alpha
      ||x||_V^2+||Bx||_H^2,
    \end{split}
  \end{eqnarray}
  and  \begin{eqnarray} \label{eq:3.3}
\sup_{( t, \omega ) \in [0, T] \times \Omega} \| A ( t,\omega ) \|_{{\mathscr L} ( V, V^* )}
 +\sup_{( t, \omega ) \in [0, T] \times \Omega} \| B ( t,\omega ) \|_{{\mathscr L} ( V, H )} \leq C \ .
\end{eqnarray}
\end{ass}

\begin{ass} \label{ass:3.2}
   The mappings $b:[0,T]\times  \Omega\times H  \longrightarrow H$ and $g:[0,T]\times\Omega\times H  \longrightarrow H$  are both $\mathscr P\times
   \mathscr B(H)/\mathscr B(H) $-measurable
   such that  $b(\cdot,0), g(\cdot,0)\in M_{\mathscr{F}}^2(0,T;H)
   $; the mapping $\sigma:[0,T]\times  \Omega \times H  \longrightarrow l^2(H) $ is $\mathscr P\times
   \mathscr B(H)/\mathscr B(l^2(H)) $-measurable
   such that $\sigma(\cdot, 0)\in l_{\mathscr{F}}^2(0, T, H)$.
   And there exists a constant $C$  such that
   for all $x, \bar x\in V$ and a.s.$(t,\omega)\in [0,T]\times \Omega,$
   \begin{eqnarray}
     \begin{split}
       ||b(t,x)-b(t,x)||_H+ ||g(t,x)-g(t,x)||_H
       +||\sigma(t, x)-\sigma(t,x)||
       _{l^2(H)} \leq  C||x-\bar x||_H.
     \end{split}
   \end{eqnarray}

\end{ass}

\begin{defn}
\label{defn:c1}
A $V$-valued, $\{{\mathscr F}_t\}_{0\leq t\leq T}$-adapted process $X(\cdot)$ is said to be a solution to the
SEE \eqref{eq:3.1}, if $X (\cdot) \in { M}_{
\mathscr F}^2 ( 0, T; V )$ such that for every $\phi \in V$
and a.e. $( t, \omega ) \in [0, T ] \times \Omega$, it holds that
\begin{eqnarray}
\left\{
\begin{aligned}
( X (t), \phi )_H =& \ ( x, \phi )_H + \int_0^t \left < A (s) X (s), \phi \right > d s
+\int_0^t ( b ( s, X (s)), \phi )_H d s + \int_0^t ( B(s)X(s)+ g ( s, X (s) ), \phi )_H d W (s)
\\& + \sum_{i=1}^{\infty}\int_0^t (\sigma^i ( s,X (s-) ), \phi )_H dH^i(s), \quad t \in [ 0, T ] , \\
X(0) =& \ x\in H  ,
\end{aligned}
\right.
\end{eqnarray}
or alternatively, $X (\cdot)$ satisfies the following It\^o's equation in $V^*$:
\begin{eqnarray}
\left\{
\begin{aligned}
X (t)=& \  x+ \int_0^t  A (s) X (s)d s
+\int_0^t  b ( s, X (s))d s + \int_0^t
[B(s)X(s)+ g ( s, X (s) )] d W (s)
 + \sum_{i=1}^{\infty}\int_0^t \sigma^i ( s, X (s-) ) d H^i(s), \\
X(t) =& \ x\in H .
\end{aligned}
\right.
\end{eqnarray}
\end{defn}

Now we state our main result.

\begin{thm} \label{thm:3.1}
  Let Assumptions \ref{ass:3.1}-\ref{ass:3.2} be
  satisfied by any given coefficients
  $(A,B,b,g,\sigma)$ of the SEE \eqref{eq:3.1}. Then  for any initial
    value $X(0)=x,$ the
    SEE \eqref{eq:3.1} has a unique
  solution $X(\cdot)\in M_{\mathscr{F}}^2(0,T;V).$
\end{thm}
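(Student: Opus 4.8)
The plan is to run the method of continuity (the parameter extension method), driven by the a priori energy estimate that the It\^o formula of Lemma~\ref{lem:c1} provides; uniqueness and the continuous dependence needed to run the continuation argument both come out of that same estimate.

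First I would establish the basic energy estimate. For a solution $X\in M_{\mathscr F}^2(0,T;V)$, apply Lemma~\ref{lem:c1} with $Y=X$, $\Gamma=A(\cdot)X+b(\cdot,X)$, $Z=B(\cdot)X+g(\cdot,X)$ and $R^i=\sigma^i(\cdot,X)$. Taking expectations annihilates the two martingale integrals against $dW$ and $dH^i$, and the strong pairwise orthogonality $\la H^i,H^j\ra=\delta_{ij}t$ collapses the double sum $\sum_{i,j}\int_0^t(R^i,R^j)_H\,d[H^i,H^j]$ into the single term $E\int_0^t\|\sigma(s,X(s))\|_{l^2(H)}^2\,ds$. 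The crucial point is that the It\^o correction of the Brownian integral contributes $E\int_0^t\|B(s)X(s)+g\|_H^2\,ds$, whose leading piece $\|B(s)X(s)\|_H^2$ is of the same top order in $\|X\|_V$ as the dissipation generated by $A$. It is precisely the extra summand $\|Bx\|_H^2$ built into the coercivity inequality of Assumption~\ref{ass:3.1} that absorbs this piece; after that absorption the remaining $b,g,\sigma$ contributions are controlled by the Lipschitz and linear growth bounds of Assumption~\ref{ass:3.2} together with Young's inequality, giving
\[
E\|X(t)\|_H^2+\alpha\,E\int_0^t\|X(s)\|_V^2\,ds\le \|x\|_H^2+C\,E\int_0^t\|X(s)\|_H^2\,ds+C\,E\int_0^t\big(\|b(s,0)\|_H^2+\|g(s,0)\|_H^2+\|\sigma(s,0)\|_{l^2(H)}^2\big)\,ds.
\]
Gronwall's inequality then yields the bound in $M_{\mathscr F}^2(0,T;V)$ (and, via Burkholder--Davis--Gundy on the martingale terms, in $S_{\mathscr F}^2(0,T;H)$). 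Applying the identical computation to the difference $X_1-X_2$ of two solutions, where the initial datum and the free terms cancel and only the Lipschitz differences survive, Gronwall forces $X_1=X_2$, so uniqueness holds unconditionally.

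For existence I would interpolate the problem. With a fixed deterministic reference operator $A_0:V\to V^*$ satisfying $-\la A_0 x,x\ra=\alpha\|x\|_V^2$ (for instance a negative multiple of the Riesz isomorphism), set $A_\tau=(1-\tau)A_0+\tau A$ and consider, for $\tau\in[0,1]$,
\[
dX=[A_\tau X+\tau b(t,X)]\,dt+\tau[B(t)X+g(t,X)]\,dW+\tau\su\sigma^i(t,X(t-))\,dH^i(t),\qquad X(0)=x.
\]
Because the coercivity inequality is affine in the operator and $\tau^2\le\tau$, the family $A_\tau$ satisfies Assumption~\ref{ass:3.1} with constants independent of $\tau$, so the energy estimate above holds uniformly in $\tau$. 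Let $J$ be the set of $\tau\in[0,1]$ for which this problem is uniquely solvable in $M_{\mathscr F}^2(0,T;V)$ for every initial datum and every admissible forcing. The set $J$ is open: if $\tau_0\in J$, one solves the $\tau$-problem for $\tau$ near $\tau_0$ by Picard iteration, moving the $(\tau-\tau_0)$-discrepancy of the coefficients to the right-hand side as an extra forcing term and using the uniform estimate (continuous dependence on the forcing) to show the iteration is a contraction on $M_{\mathscr F}^2(0,T;V)$ once $|\tau-\tau_0|$ is small enough. The set $J$ is closed: the uniform bound gives weak compactness in $M_{\mathscr F}^2(0,T;V)$, so along $\tau_n\to\tau$ with $\tau_n\in J$ the solutions $X_n$ converge weakly to a limit that one identifies as a solution at $\tau$, uniqueness being already known. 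Since $\tau=0$ gives the deterministic coercive equation $dX=A_0X\,dt$, which is classically solvable, $0\in J$; connectedness of $[0,1]$ then forces $J=[0,1]$, and $\tau=1$ is exactly \eqref{eq:3.1}.

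The main obstacle is the one flagged above: the operator $B$ sitting inside the Brownian integral produces, through the It\^o isometry, a term $\int_0^t\|B(s)X(s)\|_H^2\,ds$ of the same order as the $V$-dissipation, so it cannot be absorbed as a lower-order perturbation and the naive coercivity of $A$ alone would not close the estimate; everything hinges on the reinforced coercivity of Assumption~\ref{ass:3.1}. A secondary technical point is the infinite Teugels family, where one must justify interchanging the infinite sum with integration and exploit $\la H^i,H^j\ra=\delta_{ij}t$, the resulting bound being finite exactly because $\sigma(\cdot,X)\in l_{\mathscr F}^2(0,T;H)$.
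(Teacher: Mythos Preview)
Your overall plan---energy estimate from the It\^o formula, uniqueness from the difference estimate, then a method-of-continuity argument---is exactly the paper's strategy. The difference lies in the choice of homotopy. The paper keeps the operators $A$ and $B$ fixed throughout and interpolates only the lower-order nonlinearities, considering
\[
dX=[A X+\rho\,b(t,X)+b_0]\,dt+[BX+\rho\,g(t,X)+g_0]\,dW+\textstyle\sum_i[\rho\,\sigma^i(t,X)+\sigma_0^i]\,dH^i,
\]
so that the discrepancy terms moved to the right-hand side in the contraction step are $H$-valued (respectively $l^2(H)$-valued), and Theorem~\ref{thm:3.2} applies directly with a $\rho$-independent constant; one then reaches $\rho=1$ from $\rho=0$ in finitely many uniform steps (no closedness argument is needed). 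The price is that the base case $\rho=0$ is still a genuine linear SEE with the random unbounded operators $A,B$ and arbitrary $H$-forcing; this is the content of Lemma~\ref{lem:3.3}, proved by Galerkin approximation.

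Your homotopy also deforms $A$ to a deterministic $A_0$ and scales $B$ by $\tau$. That is a legitimate variant, but it creates a gap at the endpoint you call trivial. Because your set $J$ is (correctly) defined as solvability \emph{with arbitrary admissible forcing}, the case $\tau=0$ is not the deterministic equation $dX=A_0X\,dt$: it is
\[
dX=[A_0X+f_0]\,dt+g_0\,dW+\textstyle\sum_i k_0^i\,dH^i,
\]
which still carries stochastic integrals and (because you moved the $(\tau-\tau_0)(A-A_0)x$ term to the drift) needs $f_0\in M^2_{\mathscr F}(0,T;V^*)$, not merely $H$. So two things are missing: (i) an existence proof for this linear base problem---it is classical (Pardoux/Krylov--Rozovski\u{\i}) since $A_0$ is deterministic, but it is not the one-line ODE fact you invoke; and (ii) an extension of the continuous-dependence estimate to $V^*$-valued drift forcing, so that $(\tau-\tau_0)(A-A_0)(x_1-x_2)$ and $(\tau-\tau_0)B(x_1-x_2)$ are controlled by $\|x_1-x_2\|_{M^2_{\mathscr F}(0,T;V)}$ in the contraction. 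Both are straightforward, but they are exactly the substance you were hoping to bypass; the paper instead pays that cost once, in Lemma~\ref{lem:3.3}, and keeps the continuation argument purely $H$-valued. Finally, your closedness step via weak compactness is superfluous: once the a priori estimate is uniform in $\tau$, the contraction radius is uniform and finitely many open steps cover $[0,1]$.
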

To prove this  theorem, we first show the following
result on  the continuous dependence of  the solution to  the  SEE \eqref{eq:3.1}.

 \begin{thm} \label{thm:3.2}
  Let  $ X(\cdot)$  be a solution to
  the  SEE   \eqref{eq:3.1}
   with the initial value $X(0)=x$ and
    the coefficients $(A,B, b,g,\sigma)$
   which satisfy Assumptions \ref{ass:3.1}-\ref{ass:3.2}. Then
  the following estimate holds:
\begin{eqnarray}\label{eq:3.4}
\begin{split}
  \sup_{0\leq t\leq T}\mathbb E[ ||\hat X(t)||_H^2]+{\mathbb E} \bigg [ \int_0^T \| X (t) \|_V^2 d t \bigg ]  \leq K& \bigg \{ ||x||_H^2 + {\mathbb E}
\bigg [ \int_0^T \| b ( t, 0) \|_H^2 d t \bigg ] + {\mathbb E}
\bigg [ \int_0^T \| g ( t, 0) \|_H^2 d t \bigg ]
\\&+ {\mathbb E} \bigg [ \int_{0}^T \| \sigma (t,0) \|^2_{l^2(H)}  d t \bigg ] \bigg \}.
\end{split}
\end{eqnarray}
Furthermore,  suppose that   $ \bar X(\cdot)$  is a solution to
  the  SEE   \eqref{eq:3.1}
   with the initial value $\bar X(0)=\bar x \in H$ and the coefficients $(A,B,  \bar b, \bar g,\bar\sigma)$
   satisfying Assumptions \ref{ass:3.1}-\ref{ass:3.2},
   then we have

   \begin{eqnarray}\label{eq:3.5}
&& \sup_{0\leq t\leq T}\mathbb E[ || X(t)-\bar X(t)||_H^2]+  {\mathbb E} \bigg [ \int_0^T \| X (t) - {\bar X} (t) \|_V^2 d t \bigg ] \nonumber \\
&& \leq K \bigg \{  \|x-\bar x\|^2_H
+ {\mathbb E} \bigg [ \int_0^T \| b ( t, {\bar X} (t) )
- {\bar b} ( t, {\bar X} (t) ) \|_H^2 d t \bigg ] + {\mathbb E} \bigg [ \int_0^T \| g ( t, {\bar X} (t) ) - {\bar g}( t, {\bar X} (t) ) \|_H^2 d t \bigg ]
\\&&~~~~~~+ {\mathbb E} \bigg [ \int_0^T  \| \sigma ( t,   {\bar X} (t)) - {\bar \sigma}( t,   {\bar X} (t) ) \|_{l^2(H)}^2d t \bigg ]   \bigg \} .\nonumber
\end{eqnarray}
 \end{thm}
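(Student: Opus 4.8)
The plan is to obtain both estimates \eqref{eq:3.4} and \eqref{eq:3.5} by applying the It\^o formula of Lemma \ref{lem:c1} in the Gelfand triple, followed by the coercivity condition from Assumption \ref{ass:3.1} and a Gronwall argument. The two estimates are structurally identical: \eqref{eq:3.4} is the special case of \eqref{eq:3.5} in which the second solution is the trivial one $\bar X\equiv 0$ with coefficients $(\bar b,\bar g,\bar\sigma)=(b(\cdot,0),g(\cdot,0),\sigma(\cdot,0))$ frozen at zero. I would therefore prove \eqref{eq:3.5} and read off \eqref{eq:3.4} as a corollary, or prove them in parallel.

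Let me set $\tilde X=X-\bar X$. Subtracting the two copies of the equation in Definition \ref{defn:c1}, the process $\tilde X$ satisfies an It\^o equation in $V^*$ whose drift in the $V^*$-component is $A(t)\tilde X(t)$, whose $dt$-correction term is $b(t,X)-\bar b(t,\bar X)$, whose $dW$-integrand is $B(t)\tilde X(t)+[g(t,X)-\bar g(t,\bar X)]$, and whose $dH^i$-integrand is $\sigma^i(t,X(t-))-\bar\sigma^i(t,\bar X(t-))$. First I would apply Lemma \ref{lem:c1} to $\|\tilde X(t)\|_H^2$. Writing the quadratic-variation term via $d[H^i,H^j]$ and taking expectations, the martingale parts ($dW$ and $dH^i$) vanish; using the orthonormality $\la H^i,H^j\ra=\delta_{ij}t$ so that $\E\int_0^t(R^i,R^j)_H\,d[H^i,H^j]=\E\int_0^t\|R(s)\|_{l^2(H)}^2\,ds$, I arrive at an identity of the schematic form
\begin{equation*}
\E\|\tilde X(t)\|_H^2=\|x-\bar x\|_H^2+\E\int_0^t\Big[2\la A(s)\tilde X(s),\tilde X(s)\ra+2(\tilde X,b-\bar b)_H+\|B\tilde X+(g-\bar g)\|_H^2+\|\sigma-\bar\sigma\|_{l^2(H)}^2\Big]ds.
\end{equation*}

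Next I would invoke coercivity. The key cancellation is that $-2\la A(s)\tilde X,\tilde X\ra+2\|B(s)\tilde X\|_H^2$ is controlled below, since Assumption \ref{ass:3.1} gives $2\la A\tilde X,\tilde X\ra\le -2\alpha\|\tilde X\|_V^2-2\|B\tilde X\|_H^2+2\lambda\|\tilde X\|_H^2$. Expanding $\|B\tilde X+(g-\bar g)\|_H^2\le 2\|B\tilde X\|_H^2+2\|g-\bar g\|_H^2$, the two $\|B\tilde X\|_H^2$ terms cancel up to a factor, leaving a strictly negative $-\alpha\|\tilde X\|_V^2$ coercive gain (after absorbing constants) against positive contributions $2\lambda\|\tilde X\|_H^2$, the cross term $2(\tilde X,b-\bar b)_H$, and the frozen-coefficient discrepancies. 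The Lipschitz Assumption \ref{ass:3.2} lets me split each discrepancy as, e.g., $\|g(s,X)-\bar g(s,\bar X)\|_H\le\|g(s,X)-g(s,\bar X)\|_H+\|g(s,\bar X)-\bar g(s,\bar X)\|_H\le C\|\tilde X\|_H+\|g(s,\bar X)-\bar g(s,\bar X)\|_H$, and similarly for $b$ and $\sigma$; Young's inequality turns the cross term and the Lipschitz pieces into $C\|\tilde X\|_H^2$ plus the three source integrals appearing on the right of \eqref{eq:3.5}. After these estimates and moving the coercive term to the left, I obtain
\begin{equation*}
\E\|\tilde X(t)\|_H^2+\alpha\,\E\int_0^t\|\tilde X(s)\|_V^2\,ds\le\|x-\bar x\|_H^2+C\,\E\int_0^t\|\tilde X(s)\|_H^2\,ds+\mathcal R(t),
\end{equation*}
where $\mathcal R(t)$ collects the three source terms. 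Dropping the $V$-integral first and applying Gronwall's inequality to $t\mapsto\E\|\tilde X(t)\|_H^2$ bounds $\sup_{0\le t\le T}\E\|\tilde X(t)\|_H^2$ by the right-hand side of \eqref{eq:3.5}; feeding that bound back into the retained $\alpha$-term controls $\E\int_0^T\|\tilde X\|_V^2\,ds$, yielding both parts of \eqref{eq:3.5}.

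The step I expect to be the main obstacle is the rigorous handling of the infinite sum in the Teugels-martingale term, both in justifying that the jump martingale $\sum_i\int_0^t(\tilde X,R^i)_H\,dH^i$ is a genuine martingale (so its expectation vanishes) and in correctly reducing the double sum $\sum_{i,j}\int_0^t(R^i,R^j)_H\,d[H^i,H^j]$ to $\E\int_0^t\|R(s)\|_{l^2(H)}^2\,ds$. This requires the strong orthonormality of the $H^i$ together with the integrability hypotheses $R\in l^2_{\mathscr F}(0,T;H)$, and care that the predictable quadratic variation (angle bracket $\la\cdot,\cdot\ra$) rather than the optional one governs the expectation; the coercivity cancellation of $\|B\tilde X\|_H^2$ is delicate but, once the bracket bookkeeping is correct, purely algebraic.
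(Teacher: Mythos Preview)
Your proposal is correct and follows the paper's proof essentially line by line: apply the It\^o formula of Lemma~\ref{lem:c1} to $\|\hat X\|_H^2$, use the coercivity of Assumption~\ref{ass:3.1} together with the Lipschitz splitting $\phi(t,X)-\bar\phi(t,\bar X)=[\phi(t,X)-\phi(t,\bar X)]+[\phi(t,\bar X)-\bar\phi(t,\bar X)]$, take expectations (killing the $dW$-, $dH^i$-, and compensated $d\{[H^i,H^j]-\langle H^i,H^j\rangle\}$-integrals), and finish with Gronwall. One small correction on the reduction: to recover \eqref{eq:3.4} from \eqref{eq:3.5} the paper takes $(\bar b,\bar g,\bar\sigma)\equiv(0,0,0)$ and $\bar x=0$ so that $\bar X\equiv 0$ is genuinely a solution of the comparison equation---your choice $(\bar b,\bar g,\bar\sigma)=(b(\cdot,0),g(\cdot,0),\sigma(\cdot,0))$ would not make $\bar X\equiv 0$ a solution unless $b(t,0)=g(t,0)=\sigma(t,0)=0$; with the correct choice the right-hand side of \eqref{eq:3.5} becomes $\|b(t,\bar X)-\bar b(t,\bar X)\|_H^2=\|b(t,0)\|_H^2$, etc., as desired.
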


 \begin{proof}
   The estimate  \eqref{eq:3.4}
   can be directly obtained by
   the estimate \eqref {eq:3.5}
   by taking  the initial value $\bar X(0)=0$ and the coefficients $(A,B,  \bar b, \bar g,\bar\sigma)=(A, B,0,0,0)$ which imply that
   $\bar X(\cdot)\equiv 0.$
   Therefore, it suffices to prove the estimate
   \eqref {eq:3.5}.
   For the sake of simplicity,
   in the following discussion, we will
   use the following shorthand notation:
   \begin{eqnarray}
   \begin{split}
  & {\hat X} (t) \triangleq X (t) - {\bar X} (t) , \quad \hat x \triangleq x - \bar x,
  \\& \Lambda\triangleq  \|x-\bar x\|^2_H
+ {\mathbb E} \bigg [ \int_0^T \| b ( t, {\bar X} (t) )
- {\bar b} ( t, {\bar X} (t) ) \|_H^2 d t \bigg ] + {\mathbb E} \bigg [ \int_0^T \| g ( t, {\bar X} (t) ) - {\bar g}( t, {\bar X} (t) ) \|_H^2 d t \bigg ]
\\&\quad \quad \quad+ {\mathbb E} \bigg [ \int_0^T  \| \sigma ( t, {\bar X} (t)) - {\bar \sigma}( t, {\bar X} (t) ) \|_{l^2 (H)}^2  d t \bigg ],
\end{split}
\end{eqnarray}
and for
$\phi=b,g,\sigma$
\begin{eqnarray}
\begin{split}
& {\tilde \phi} (t) \triangleq \phi ( t, { X} (t))
- {\bar \phi} ( t, {\bar X} (t) ),
{\hat \phi} (t) \triangleq \phi ( t, {\bar X} (t))
- {\bar \phi} ( t, {\bar X} (t) ) ,
 {\Delta \phi} (t) \triangleq \phi ( t, { X} (t))
- { \phi} ( t, {\bar X} (t) ), ~~t\in [0,T],
\end{split}
\end{eqnarray}
where when  $\phi=\sigma,$
the terms $X(t)$ and
$\bar X(t)$ will be replaced by $X(t-)$
and $\bar X(t-),$ respectively.

Applying It\^{o} formula in Lemma \ref{lem:c1} to $||\hat X(t)||_H^2$
and using Assumptions \ref{ass:3.1}-\ref{ass:3.2} and the elementary inequalities $|a+b|^2\leq 2a^2+2 b^2$
and  $2
a b \leq a^2 +  b^2$, $\forall a, b > 0$, 
 we get  that

\begin{eqnarray}\label{eq:3.6}
     ||\hat X(t)||_H^2=&&||\hat x||^2_H+
     2\int_0^t\langle  A(s)\hat X(s), \hat X(s) \rangle
     ds +2\int_0^t  ( \hat X(s), \tilde b(s))_H ds
+\int_0^t||B(s)\hat X(s)+\tilde g(s)||_H^2ds\nonumber
  \\&&+\sum_{i=1}^\infty
     \sum_{j=1}^\infty\int_0^t (\tilde \sigma^i(s), \tilde\sigma^j(s))_Hd[H^i(s), H^j(s)]
  +2\int_0^t  ( \hat X(s), B(s)\hat X(s)+
     \tilde g(s))_H
     dW(s)\nonumber
     \\&&+2\sum_{i=1}^{\infty}\int_0^t
      (\hat X(s), \tilde \sigma^i(s))dH^i(s)\nonumber
     \\=&&||\hat x||^2_H+
     2\int_0^t\bigg[\langle  A(s)\hat X(s), \hat X(s) \rangle+||B(s)\hat X(s)||_H^2\bigg]
     ds+\int_0^t ||\Delta g(s)+\hat g(s)||_H^2ds\nonumber
     \\&&
     +2\int_0^t  (  B(s)\hat X(s), \Delta g(s)+\hat g(s))_H
     ds+2\int_0^t  ( \hat X(s), \Delta b(s)+\hat b(s))_H
     ds\nonumber
\\&&+\int_0^t || \Delta\sigma(s)+\hat\sigma(s)||_{l^2(H)}
     ds
+\sum_{i=1}^\infty
     \sum_{j=1}^\infty\int_0^t ( \tilde\sigma^i(s), \tilde\sigma^j(s))_H
     d\{[H^i(s), H^j(s)]-\langle H^i(s), H^j(s)\rangle\}
     \\&&+
     2\int_0^t  ( \hat X(s), B(s)\hat X(s)
     +\tilde g(s))_H dW(s)\nonumber
      +2\sum_{i=1}^{\infty}\int_0^t
      (\hat X(s), \tilde \sigma^i(s))dH^i(s)\nonumber
     \\ \leq &&
     K\Lambda-\alpha
\int_0^t   \| \hat X ( s) \|_V^2ds +K
\int_0^t \| \hat X ( s) \|_H^2dt\nonumber
\\&& +\sum_{i=1}^\infty
     \sum_{j=1}^\infty\int_0^t ( \tilde\sigma^i(s), \tilde\sigma^j(s))_H
     d\{[H^i(s), H^j(s)]-\langle H^i(s), H^j(s)\rangle\}\nonumber
     \\&&+
     2\int_0^t  ( \hat X(s), B(s)\hat X(s)
     +\tilde g(s))_H dW(s)\nonumber
      +2\sum_{i=1}^{\infty}\int_0^t
      (\hat X(s), \tilde \sigma^i(s))dH^i(s)\nonumber
 \end{eqnarray}
Taking expectation on both sides of the above
inequality, we  get that

\begin{eqnarray}\label{eq:3.7}
\begin{split}
    \mathbb E[ ||\hat X(t)||_H^2]
    +\alpha\mathbb E\bigg[\int_0^t ||\hat X(s)||_V^2ds\bigg]
    \leq
     K\Lambda
+K{\mathbb E} \bigg [
\int_0^t \| \hat X ( s) \|_H^2dt\bigg].
\end{split}
 \end{eqnarray}
Then by virtue of  Gr\"onwall's inequality to  $\mathbb E[||X(t)||_H^2],$ we obtain
\begin{eqnarray}\label{eq:3.8}
\begin{split}
    \sup_{0\leq t\leq T}\mathbb E[ ||\hat X(t)||_H^2]
    +\mathbb E\bigg[\int_0^T ||\hat X(s)||_V^2ds\bigg]
    \leq &
     K \Lambda.
\end{split}
 \end{eqnarray}
The proof is complete.
 \end{proof}
In the following, we give the existence and
uniqueness result for  the solution
of the SEE \eqref{eq:3.1}
for a simple case where the
coefficients $(b, g, \sigma)$ is
independent of the variable $x.$

\begin{lem} \label{lem:3.3}
  Given three stochastic processes  $b,g$ and $\sigma$  such that $b(\cdot)\in M_{\mathscr{F}}^2(0,T;H), g(\cdot)\in M_{\mathscr{F}}^2(0,T;H)$ and
   $\sigma(\cdot)\in M_{\mathscr{F}}^2(0,T;l^2(H)).$
 Suppose that the operators $ A$ and $B$  satisfy  Assumption \ref{ass:3.1}.
  Then there exists a unique solution
  $X(\cdot)\in M_{\mathscr{F}}^2(0,T;V)$ to the
  following SEE:
\begin{eqnarray} \label{eq:3.12}
  \left\{
  \begin{aligned}
   d X (t) = & \ [ A (t) X (t) + b ( t) ] d t
+ [B(t)X(t)+g( t) ]d W(t)+ 
\sum_{i=1}^{\infty}\sigma^i (t )dH^i(t),  \\
X (0) = & \  x , \quad t \in [ 0, T ].
  \end{aligned}
  \right.
\end{eqnarray}
\end{lem}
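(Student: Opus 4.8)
The plan is to dispatch uniqueness first, using the continuous dependence theorem already proved, and then to construct a solution by a Galerkin approximation in the variational framework. For uniqueness, I would observe that \eqref{eq:3.12} is the special case of the SEE \eqref{eq:3.1} in which $b,g,\sigma$ do not depend on the state variable; hence the Lipschitz constant in Assumption \ref{ass:3.2} relative to $x$ is zero and that assumption holds trivially. If $X$ and $\bar X$ are two solutions in $M_{\mathscr F}^2(0,T;V)$ with the same data, their difference $\hat X = X - \bar X$ solves the homogeneous equation, so applying the It\^o formula of Lemma \ref{lem:c1} to $\|\hat X(t)\|_H^2$ together with the coercivity of Assumption \ref{ass:3.1} and Gr\"onwall's inequality --- exactly the computation in the proof of Theorem \ref{thm:3.2} --- forces $\hat X \equiv 0$. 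Thus uniqueness is immediate from the estimate \eqref{eq:3.5}.

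For existence I would use the Galerkin method. Fix a linearly independent total system $\{e_k\}_{k\ge 1}\subset V$ whose finite linear combinations are dense in $V$ (hence in $H$), let $V_n=\mathrm{span}\{e_1,\dots,e_n\}$, and let $P_n$ be the projection onto $V_n$. Projecting \eqref{eq:3.12} onto $V_n$ (i.e. testing against $e_1,\dots,e_n$) yields the finite-dimensional linear It\^o equation
\begin{equation*}
  X_n(t) = P_n x + \int_0^t \big[ A_n(s) X_n(s) + P_n b(s)\big]\,ds + \int_0^t \big[ B_n(s) X_n(s) + P_n g(s)\big]\,dW(s) + \sum_{i=1}^\infty \int_0^t P_n \sigma^i(s)\,dH^i(s),
\end{equation*}
where $A_n,B_n$ are the compressions of $A,B$ to $V_n$. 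Since $A$ and $B$ are uniformly bounded by \eqref{eq:3.3}, the coefficients are globally Lipschitz in the finite-dimensional state, so this equation has a unique strong solution $X_n\in M_{\mathscr F}^2(0,T;V_n)$ driven by $W$ and the pairwise strongly orthonormal Teugels martingales $\{H^i\}$.

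The next step is a priori bounds uniform in $n$. Applying Lemma \ref{lem:c1} to $\|X_n(t)\|_H^2$ and taking expectations (so the martingale terms and the compensated bracket $d\{[H^i,H^j]-\langle H^i,H^j\rangle\}$ drop out, leaving $\sum_i\mathbb{E}\int_0^t\|P_n\sigma^i(s)\|_H^2\,ds=\mathbb{E}\int_0^t\|P_n\sigma(s)\|_{l^2(H)}^2\,ds$ from the jump part), then using the coercivity of Assumption \ref{ass:3.1} to absorb $-\langle A_nX_n,X_n\rangle+\|B_nX_n\|_H^2$, and finally Young's inequality and Gr\"onwall's lemma, I obtain $\sup_n\big(\sup_{0\le t\le T}\mathbb{E}\|X_n(t)\|_H^2+\mathbb{E}\int_0^T\|X_n(t)\|_V^2\,dt\big)\le K\big(\|x\|_H^2+\mathbb{E}\int_0^T(\|b\|_H^2+\|g\|_H^2)\,dt+\mathbb{E}\int_0^T\|\sigma\|_{l^2(H)}^2\,dt\big)$. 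In particular $\{X_n\}$ is bounded in the Hilbert space $M_{\mathscr F}^2(0,T;V)$, so a subsequence converges weakly to some $X\in M_{\mathscr F}^2(0,T;V)$, and by the linearity and boundedness of $A,B$ one simultaneously gets $A X_n\rightharpoonup AX$ in $M_{\mathscr F}^2(0,T;V^*)$ and $B X_n\rightharpoonup BX$ in $M_{\mathscr F}^2(0,T;H)$.

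Finally I would identify the weak limit as a solution: testing the projected equation against $\phi=e_k$ times a bounded predictable scalar process and passing to the weak limit shows that $X$ satisfies the integral identity of Definition \ref{defn:c1}, the passage being routine precisely because every state-dependent term is linear, so that --- unlike the nonlinear monotone setting --- no Minty-type argument is needed. Lemma \ref{lem:c1} then upgrades $X$ to a strongly c\`adl\`ag $H$-valued process and delivers the $S^2$-type bound. I expect the main technical obstacle to be the justification of the limit in the stochastic integral terms, in particular controlling the infinite sum over the Teugels martingales uniformly in $n$ and verifying that the weak limits of $\int_0^{\cdot}B_nX_n\,dW$ and $\sum_i\int_0^{\cdot}P_n\sigma^i\,dH^i$ coincide with the corresponding integrals of $X$; here the orthonormality relation $\langle H^i,H^j\rangle=\delta_{ij}t$ and the It\^o isometry, combined with the uniform estimate above, are the essential tools.
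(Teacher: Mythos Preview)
Your proposal is correct and follows essentially the same route as the paper: uniqueness from the a priori estimate \eqref{eq:3.5}, existence by Galerkin approximation with uniform energy bounds from It\^o's formula plus coercivity, weak compactness in $M_{\mathscr F}^2(0,T;V)$, and identification of the limit by testing against basis vectors. The paper's only notable refinement is that, for the stochastic integral terms, it passes to the limit by observing that the map $f\mapsto\int_0^t f\,dW$ is bounded linear from $M_{\mathscr F}^2(0,t;\mathbb R)$ into $L^2(\Omega)$ and hence weakly continuous---this is what your It\^o isometry remark amounts to, stated more sharply.
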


\begin{proof}
The  proof can be obtained by Galerkin approximations  in the same way as
  the proof of  Theorem 3.2 in  \cite{chenshaokuan}
  with minor change. Now we begin our 
  proof.
First of all, we fix a standard complete orthogonal basis $\{e_{i} | i=1,2,3,\dots\}$ in the space $H$ which is dense  in the space $V$.
For any $n,$ consider the following
finite-dimensional stochastic differential
equation in $\mathbb R^n:$
\begin{eqnarray}\label{eq:6.18}
\left\{
\begin{aligned}
x_1^n(t)=& \ (x, e_1)_H +\int_0^t \bigg [ \sum_{j=1}^nx_j^n(s) \langle A (s) e_j, e_1\rangle
+ (b(s), e_1)_H \bigg ] d s +\int_0^t \bigg[\sum_{j=1}^n x_j^n(s) (B(s)e_j, e_1)_H + (g(s), e_1)_H\bigg]d W (s)
\\&+\sum_{i=1}^{\infty}\int_0^t(\sigma^i(s), e_1)_Hd H^i (s) , \\
x_2^n(t)=& \ (x, e_2)_H+\int_0^t \bigg [ \sum_{j=1}^nx_j^n(s) \langle A (s) e_j, e_1\rangle
+ (b(s), e_2)_H \bigg ] d s +\int_0^t \bigg[\sum_{j=1}^n x_j^n(s) ( B(s)e_j, e_2)_H + (g(s), e_2)_H\bigg]d W (s)
\\&+\sum_{i=1}^{\infty}\int_0^t(\sigma^i(s), e_2)_Hd H^i (s) , \\
&\vdots \\
x_n^n(t)=& \ (x, e_n)_H+\int_0^t \bigg [ \sum_{j=1}^nx_j^n(s) \langle A (s) e_j, e_n\rangle
+ (b(s), e_n)_H \bigg ] d s +\int_0^t \bigg[\sum_{j=1}^n x_j^n(s)  (B(s)e_j, e_n)_H + (g(s), e_n)_H\bigg]d W (s)
\\&+\sum_{i=1}^{\infty}\int_0^t(\sigma^i(s), e_n)_Hd H^i (s) ,
\end{aligned}
\right.
\end{eqnarray}
Under Assumptions \ref{ass:3.1}
and \ref{ass:3.2}, from the existence and uniqueness theory for
the finite dimensional SDE driven
by Teugels martingale , the above equation admits a unique  strong solution
$x^n(\cdot) \in{ M}_{
\mathscr F}^2 ( 0, T; \mathbb R^n )$ ,
where $x^n(\cdot)=(x_1^n(\cdot),\cdots,  x_n^n(\cdot)).$

Now we can define an  approximation solution to \eqref{eq:3.12} as follows:
\begin{eqnarray*}
X^n(t):=\sum_{i=1}^{n}x_{i}^{n}(t)e_{i}, 
\end{eqnarray*}
where \begin{eqnarray*}
X^n(0):=\sum_{i=1}^{n}(x,e_{i})_He_i.
\end{eqnarray*}
Then the equation \eqref{eq:6.18} can be written as
\begin{eqnarray}\label{eq:6.21}
(X^{n}(t),e_{i})_{H} &=&\left( X^n(0),e_{i}\right)_{H}
+\int_{0}^{t}\bigg[\left\langle A( s) X^{n}(s),e_{i}\right\rangle
 +( b(s),e_{i})_H\bigg]ds \nonumber \\&&+\int_{0}^{t}\bigg[
 (B(s) X^{n}(s),e_{i})
+\left( g(s),e_{i}\right) _{H}\bigg]dW(s)
+\sum_{i=1}^{\infty}\int_0^t \int_0^t(\sigma^i(s), e_n)_Hd H^i (s) , \quad i=1,\cdots, n .
\end{eqnarray}
Now applying It\^{o} formula to $||X^n(t)||^2_H$, we get  that
\begin{eqnarray}\label{eq:6.2}
|| X^n(t)||_H^2=&&||\hat X^n(0)||^2_H+
     2\int_0^t\langle  A(s)X^n(s), X^n(s) \rangle
     ds +2\int_0^t  (X^n(s),  b(s))_H ds
+\int_0^t||B(s)X^n(s)+ g(s)||_H^2ds\nonumber
  \\&&+\sum_{i=1}^\infty
     \sum_{j=1}^\infty\int_0^t ( \sigma^i(s),\sigma^j(s))_Hd[H^i(s), H^j(s)]
  +2\int_0^t  ( X^n(s), B(s) X^n(s)+
      g(s))_H
     dW(s)\nonumber
     \\&&+2\sum_{i=1}^{\infty}\int_0^t
      ( X^n(s), \sigma^i(s))dH^i(s)\nonumber .
\end{eqnarray}
Therefore, under Assumptions \ref{ass:3.1}
and \ref{ass:3.2}, similar to the proof of the estimate \eqref{eq:3.4},  using Gr\"onwall's inequality, we can easily get the following estimate:
\begin{eqnarray}\label{eq:3.17}
\begin{split}
&{\mathbb E} \bigg [ \int_0^T \| X^n (t) \|_V^2 d t \bigg ]  \leq K \bigg \{ ||x||_H^2 + {\mathbb E}
\bigg [ \int_0^T \| b ( t) \|_H^2 d t \bigg ] + {\mathbb E}
\bigg [ \int_0^T \| g ( t) \|_H^2 d t \bigg ]
+ {\mathbb E} \bigg [ \int_{0}^T \| \sigma (t) \|^2_{l^2(H)}  d t \bigg ] \bigg \}.
\end{split}
\end{eqnarray}
This inequality implies that there is a subsequence $\{n^\prime\}$ of $\left\{ n\right\} $ and a triple $X(\cdot)
\in { M}_{\mathscr F}^2 ( 0, T; V )$ such that
\begin{eqnarray}\label{eq:3.18}
& X^{n^{\prime }}\rightarrow X\text{ weakly in }{ M}_{\mathscr F}^2 ( 0, T; V ).
\end{eqnarray}
Let $\Pi$ be an arbitrary bounded random variable on $(\Omega,\mathscr F)$ and
$\psi$ be an arbitrary bounded measurable function on $[0,T]$. From  the equality \eqref{eq:6.21}, for $n^\prime\in \mathbb{N}^*$
and basis $e_{i}$, where $i\leq n^{\prime}$, we have
\begin{eqnarray}\label{eq:6.25}
&& \mathbb E \bigg [ \int_0^T \Pi \psi(t) (X^{n^\prime}(t),e_{i})_{H}dt \bigg ] \nonumber \\
&& = \mathbb E \bigg [ \int_0^T \Pi \psi(t) \bigg\{\left( X^{n^\prime}(0),e_{i}\right)_{H}
+\int_{0}^{t}\bigg[\left\langle A( s) X^{n^\prime}(s),e_{i}\right\rangle
 +( b(s),e_{i})_H\bigg]ds \nonumber \\&&+\int_{0}^{t}\bigg[
 (B(s) X^{n^\prime}(s),e_{i})
+\left( g(s),e_{i}\right) _{H}\bigg]dW(s)
+\sum_{i=1}^{\infty}\int_0^t \int_0^t(\sigma^i(s), e_{i})_Hd H^i (s)\bigg\}dt \bigg ] .
\end{eqnarray}
Now letting $n'\longrightarrow \infty$ on the both sides of the above equation to get its limit. Firstly, from the weak convergence
property of $\{X^n\}_{n=1}^\infty$ in ${\cal M}_{\cal F}^2 ( 0, T; V )$, we have
\begin{eqnarray}\label{eq:6.26}
    \lim_{n'\rightarrow \infty}\mathbb E \bigg [ \int_{0}^{T}\Pi\psi(t)(X^{n'}(t), e_i)_H dt \bigg ]
    &=&\lim_{n'\rightarrow \infty}\mathbb E \bigg [ \int_{0}^{T}\mathbb E[\Pi|\mathscr F_t]\psi(t)(X^{n'}(t), e_i)_H dt \bigg ] \nonumber \\
    &=&\lim_{n'\rightarrow \infty}\mathbb E \bigg [ \int_{0}^{T}(X^{n'}(t),\mathbb E[\Pi|\mathscr F_t]\psi(t)e_i)_H dt \bigg ]  \nonumber \\
    &=&\mathbb E \bigg [ \int_{0}^{T}(X(t),\mathbb E[\Pi|\mathscr F_t]\psi(t)e_i)dt \bigg ] \nonumber \\
    &=&\mathbb E \bigg [ \int_{0}^{T}\Pi\psi(t)(X(t),e_i)dt \bigg ] ,
\end{eqnarray}
and
\begin{eqnarray}
    \lim_{n'\rightarrow \infty}\mathbb E \bigg [ \int_{0}^{t}\Pi\langle A(s)X^{n'}(s), e_i\rangle ds \bigg ]
    &=&\lim_{n'\rightarrow \infty}\mathbb E \bigg [ \int_{0}^{t}\mathbb E[\Pi|\mathscr F_s]\langle A(s)X^{n'}(s), e_i\rangle ds \bigg ] \nonumber \\
    &=&\lim_{n'\rightarrow \infty}\mathbb E \bigg [ \int_{0}^{t}\langle A(s)X^{n'}(s),\mathbb E[\Pi|\mathscr F_s]e_i\rangle ds \bigg ] \nonumber \\
    &=&\lim_{n'\rightarrow \infty}\mathbb E \bigg [ \int_{0}^{t}\langle X^{n'}(s), A^*(s)\mathbb E[\Pi|\mathscr F_s]e_i\rangle ds \bigg ] \nonumber \\
    &=&\mathbb E \bigg [ \int_{0}^{t}\langle X(s), A^*(s)\mathbb E[\Pi|\mathscr F_s]e_i\rangle ds \bigg ] \nonumber \\
    &=&\mathbb E \bigg [ \int_{0}^{t}\Pi\langle A(s)X(s),e_i\rangle ds \bigg ] .
\end{eqnarray}
In view of \eqref{eq:3.3} and \eqref{eq:3.17}, we conclude that
\begin{eqnarray*}
\mathbb E \bigg [ \bigg| \int_{0}^{t} \Pi \la A(s)X^{n'}(s), e_i\rangle ds\bigg| \bigg ]
\leq C \bigg \{ \mathbb E \bigg [ \int_{0}^{T} ||X^{n'}(s)||_V^2ds \bigg ] \bigg\}^{\frac{1}{2}} <C<\infty,
\end{eqnarray*}
where the constant $C$ is independent of $n'$. Hence from Fubini's Theorem and Lebesgue's Dominated Convergence Theorem, we have
\begin{eqnarray}\label{eq:6.28}
\lim_{n'\longrightarrow \infty}\mathbb E \bigg [ \int_{0}^{T}\Pi\psi(t)\int_{0}^{t}\langle A(s)X^{n'}(s),e_i\rangle ds dt \bigg ] \nonumber 
 =&& \lim_{n'\longrightarrow \infty}\int_{0}^{T} \psi(t) \mathbb E \bigg [ \int_{0}^{t} \Pi\langle A(s)X^{n'}(s),e_i\rangle ds \bigg ] dt
 \nonumber
\\ =&& \int_{0}^{T} \psi(t) \mathbb E \bigg [ \int_{0}^{t} \Pi\langle A(s)X(s),e_i\rangle ds \bigg ] dt\nonumber
\\=&& \mathbb E \bigg [\int_{0}^{T} \psi(t) \int_{0}^{t} \Pi\langle A(s)X(s),e_i\rangle ds \bigg ] dt .
\end{eqnarray}
Similarly, in view of 
\eqref{eq:3.3}, from \eqref{eq:3.17} 
 and \eqref{eq:3.18}, it is easy to 
 check  that 
$$(BX^{n^\prime}, e_i)_H \rightarrow (BX, e_i)_H\text{ weakly in }{ M}_{\mathscr F}^2 ( 0, t; \mathbb R ).$$
Since the stochastic integral with respect to the Brownian motion $W$ are linear and strong continuous mappings from ${ M}_{\mathscr F}^2 ( 0, t; \mathbb R )$ to $L^2(\Omega,{\mathscr F}_T,P;\mathbb R)$,
 it is weakly continuous. Therefore, 
\begin{eqnarray} \label{eq:3.23}
&& \lim_{n' \rightarrow \infty} \mathbb E\bigg[\Pi \int_{0}^{t}\left( B(s)X^{n^\prime}(s),e_{i}\right) _{H}dW(s)\bigg]= \mathbb E \bigg[\Pi \int_{0}^{t}\left( B(s)X(s),e_{i}\right) _{H}dW(s)\bigg] .
 \end{eqnarray}
Moreover, from \eqref{eq:3.17}, we get 
\begin{eqnarray}
&&\psi(t)\mathbb E\bigg[\Pi \bigg(\int_{0}^{t}\left( B(s)X^{n^\prime}(s),e_{i}\right) _{H}dW(s)
\bigg] \nonumber \leq  \frac{1}{2}\psi^2(t)\mathbb E |\Pi|^2
+ C \bigg \{ \mathbb E \bigg [ \int_0^T||B(s)X^{n^\prime}(s)||^2_Hdt \bigg ] \bigg \} \leq C.
\end{eqnarray}
Hence, by Fubini's Theorem and Lebesgue's Dominated Convergence Theorem, we have
\begin{eqnarray}\label{eq:6.33}
&& \lim_{n'\rightarrow \infty} \mathbb E \bigg[ \int_0^T \psi(t)\Pi \bigg(\int_{0}^{t}\left( B(s)X^{n^\prime}(s),e_{i}\right) _{H}dW(s)
\bigg]dt \nonumber \\
&&=\lim_{n'\rightarrow \infty} \int_0^T \psi(t) \mathbb E\bigg[\Pi \bigg(\int_{0}^{t}\left( B(s)X^{n^\prime}(s),e_{i}\right) _{H}dW(s)
\bigg] d t \nonumber \\
&&=\int_t^T\psi(t)\mathbb E
      \bigg[\Pi \bigg(\int_{0}^{t}\left( B(s)X(s),e_{i}\right) _{H}dW(s)\bigg)\bigg]dt
\\&&=\mathbb E\int_t^T
      \bigg[\psi(t)\Pi \bigg(\int_{0}^{t}\left( B(s)X(s),e_{i}\right) _{H}dW(s)\bigg)\bigg]dt .
 \end{eqnarray}

Therefore, combining \eqref{eq:6.26},\eqref{eq:6.28}, \eqref{eq:3.23} and \eqref{eq:6.33}, and letting $n'\rightarrow \infty$ in \eqref{eq:6.25},
we can conclude that

\begin{eqnarray}\label{eq:3.26}
&& \mathbb E \bigg [ \int_0^T \Pi \psi(t) (X(t),e_{i})_{H}dt \bigg ] \nonumber \\
&& = \mathbb E \bigg [ \int_0^T \Pi \psi(t) \bigg\{\left( X(0),e_{i}\right)_{H}
+\int_{0}^{t}\bigg[\left\langle A( s) X(s),e_{i}\right\rangle
 +( b(s),e_{i})_H\bigg]ds \nonumber \\&&+\int_{0}^{t}\bigg[
 (B(s) X(s),e_{i})
+\left( g(s),e_{i}\right) _{H}\bigg]dW(s)
+\sum_{i=1}^{\infty}\int_0^t \int_0^t(\sigma^i(s), e_{i})_Hd H^i (s)\bigg\}dt \bigg ] .
\end{eqnarray}
This implies that for a.s. $\left( t,\omega \right) \in \lbrack 0,T]\times \Omega$,
\begin{eqnarray}\label{eq:6.34}
(X(t),e_{i})_{H} &=&\left( X(0),e_{i}\right)_{H}
+\int_{0}^{t}\bigg[\left\langle A( s) X(s),e_{i}\right\rangle
 +( b(s),e_{i})_H\bigg]ds \nonumber \\&&+\int_{0}^{t}\bigg[
 (B(s) X(s),e_{i})
+\left( g(s),e_{i}\right) _{H}\bigg]dW(s)
+\sum_{i=1}^{\infty}\int_0^t \int_0^t(\sigma^j(s), e_i)_Hd H^j (s),
\end{eqnarray}
thanks to the arbitrariness of $\Pi$ and $\psi(\cdot)$.
Since the standard complete orthogonal basis $\{e_{i} | i=1,2,3,\dots\}$ in $H$ is dense in the space $V$, for every $\phi \in V$ and
a.e. $( t, \omega ) \in [ 0, T ] \times \Omega$, it holds that
\begin{eqnarray}\label{eq:6.34}
(X(t),\phi)_{H} &=&\left( X(0),\phi\right)_{H}
+\int_{0}^{t}\bigg[\left\langle A( s) X(s),\phi\right\rangle
 +( b(s),\phi)_H\bigg]ds \nonumber \\&&+\int_{0}^{t}\bigg[
 (B(s) X(s),\phi)
+\left( g(s),\phi\right) _{H}\bigg]dW(s)
+\sum_{i=1}^{\infty}\int_0^t \int_0^t(\sigma^j(s), \phi)_Hd H^j (s) .
\end{eqnarray}
Therefore, from the Definition \ref{defn:c1}, we conclude that the triple $ X(\cdot)$
is the solution to the SEE \eqref{eq:3.12}. Thus the existence is proved. For the uniqueness, the 
proof can be directly by the priori 
estimate \eqref{eq:3.5}. The
proof is complete.
\end{proof}

{ \bf\emph{Proof of Theorem \ref{thm:3.1}.}} The uniqueness of the solution to the SEE
\eqref{eq:3.1} can be got   by the a priori estimate
\eqref{eq:3.5} directly.
  For  $\rho \in [0, 1] $ and any three   given stochastic processes
   $b_0 \in { M}_{\mathscr F}^2 ( 0, T; H ),$
$g_0 \in {M}_{\mathscr F}^2 ( 0, T; H ),$  and  $\sigma_0 \in {M}_{\mathscr F}^2 ( 0, T; l^2(H) )$ we introduce  a family of parameterized  SEEs as follows:
\begin{eqnarray}\label{eq:3.30}
\begin{split}
 X(t) =& \ x +\int_0^t A (s) X (s) d s + \int_0^t \Big [\rho  b ( s, X(s) )]+ b_0(s)\Big]d s+\int_0^t \Big[ B(s)X(s)+\rho g ( s, X ( s ) )+g_0(s)\Big ] d W (s)
 \\&+\sum_{i=1}^\infty\int_0^t \Big[\rho \sigma^i (s,  X(s))
 +\sigma_0^i(s )\Big]  dH^i(s).
\end{split}
\end{eqnarray}
It is easy to see that
when we take the parameter $\rho=1$ and
$b_0 \equiv 0,g_0\equiv0,
\sigma_0 \equiv0,$
 the  SEE \eqref{eq:3.30}
 is reduced to  the original SEE
\eqref{eq:3.1}. Obviously,  the  coefficients of the SEE \eqref{eq:3.30}
satisfy  Assumption \ref{ass:3.1} and
\ref{ass:3.2}
with $(A, B,  b ,  g, \sigma)$  replaced by $(A, B, \rho b + b_0, \rho g+ g_0, \rho \sigma+ \sigma_0)$.
 Suppose
 for any
$b_0  \in { M}_{\mathscr F}^2 ( 0, T; H ),$
$g_0 \in { M}_{\mathscr F}^2 ( 0, T; H ),$  $\sigma_0 \in { M}_{\mathscr F}^2 ( 0, T; l^2(H) ) $ and  some  parameter $\rho = \rho_0$,
 there exists a unique solution $X (\cdot)\in { M}_{\mathscr F}^2 ( 0, T; V)$ to the SEE  \eqref{eq:3.30}.  For any parameter $\rho$,
the SEE \eqref{eq:3.30}
can be rewritten as
\begin{eqnarray}\label{eq:3.14}
\begin{split}
 X(t) =& \ x + \int_0^t A (s) X (s) d s + \int_0^t \Big [\rho_0 b ( s,X(s) )+ b_0(s)+(\rho-\rho_0)  b ( s,  X(s) )\Big]d s
 \\&+\int_0^t \Big[ B(s)X(s)+\rho_0g ( s, X ( s ) )+g_0(s)
 +(\rho-\rho_0)g ( s, X ( s ) )\Big ] d W (s)
 \\&+\su\int_0^t \Big[\rho_0
  \sigma^i( s,  X ( s ) )+\sigma^i_0(t )
 +(\rho-\rho_0)\sigma^i( s,  X ( s ) )\Big ] dH^i(s).
\end{split}
\end{eqnarray}
 Therefor  by the above assumption,
  for any $x (\cdot) \in {M}_{\mathscr F}^2 ( 0, T; V),$
the following  SEE
\begin{eqnarray}\label{eq:3.19}
\begin{split}
 X(t) =& \ x + \int_0^t A (s) X (s) d s + \int_0^t \Big [\rho_0 b ( s,X(s) )+ b_0(s)+(\rho-\rho_0)  b ( s,  x(s) )\Big]d s
 \\&+\int_0^t \Big[ B(s)X(s)+\rho_0g ( s, X ( s ) )+g_0(s)
 +(\rho-\rho_0)g ( s, x ( s ) )\Big ] d W (s)
 \\&+\su\int_0^t \Big[\rho_0
  \sigma^i( s,  X ( s ) )+\sigma^i_0(s )
 +(\rho-\rho_0)\sigma^i( s,  x ( s ) )\Big ] dH^i(s)
\end{split}
\end{eqnarray}
 admits a unique solution $X (\cdot) \in { M}_{\mathscr F}^2 ( 0, T; V)$.
Now define  a mapping from $ { M}_{\mathscr F}^2 ( 0, T; V)$ onto itself denoted by $$X (\cdot) = \Gamma (x (\cdot)).$$
Then for any $x_i (\cdot) \in { M}_{\mathscr F}^2 ( 0, T; V)$, $i = 1, 2$, from the Lipschitz
continuity of $b$, $g$, $\sigma$ and  a priori estimate \eqref{eq:3.5},
it follows that
\begin{eqnarray*}
|| \Gamma ( x_1 (\cdot) ) - \Gamma ( x_2 (\cdot) ) ||^2_{{ M}_{\mathscr F}^2 ( 0, T; V)}
&=& || X_1 (\cdot) - X_2(\cdot) ||^2_{{ M}_{\mathscr F}^2 ( 0, T; V)}\\
&\leq& K |\rho-\rho_0|^2 \cdot || x_1 (\cdot) - x_2(\cdot) ||^2_{{ M}_{\mathscr F}^2 ( 0, T; V)}.
\end{eqnarray*}
Here $K $ is a  positive constant independent of $\rho$. If $| \rho - \rho_0 |< \frac{1}{2 \sqrt {K}}$, the mapping $\Gamma$ is strictly contractive in
${ M}_{\mathscr  F}^2 ( 0, T; V )$. Hence it implies that
the SEE \eqref{eq:3.12} with the coefficients $( A, B, \rho b + b_0, \rho g + g_0, \rho \sigma + \sigma_0)$ admits a unique solution
$X (\cdot)\in {M}_{\cal F}^2 ( 0, T; V )$.
From Lemma \ref{lem:3.3},  the uniqueness and existence of the solution to the SEE \eqref{eq:3.12} is true for $\rho=0$. Then starting from $\rho = 0$,  one can reach $\rho = 1$ in finite steps and this finishes
 the proof of solvability of the SEE \eqref{eq:3.1}. This completes the proof.

\section{ Formulation of  Optimal Control Problem }
Let $U$  be a real-valued Hilbert space standing for the control space. Let  ${\mathscr U}$  be a nonempty  convex
closed subset of $U$.  An  admissible control process $u (\cdot) \triangleq \{ u (t), 0 \leq t \leq T \}$ is  defined
as follows.
\begin{defn}
A stochastic  process $u (\cdot)$
 defined on $[0, T]\times \Omega$ is called an admissible control process  if it is a predictable  process such that
$u (\cdot) \in { M}^2_{\mathscr F} ( 0, T; U )$ and $u (t) \in {\mathscr U}$, a.e. $t \in [0, T]$, ${\mathbb P}$-a.s..
Write ${\cal A}$ for the set of all admissible control processes.
\end{defn}
In the Gelfand triple $( V, H, V^* ),$
for any admissible control $u(\cdot)\in \cal A,$
we consider  the following  controlled
SEE driven by Teugels martingales
\begin{eqnarray} \label{eq:4.1}
  \left\{
  \begin{aligned}
   d X (t) = & \ [ A (t) X (t) + b ( t, X (t), u(t)) ] d t
+ [B(t)X(t)+g( t, X (t), u(t)) ]d W(t)
 \\&\quad + \su \sigma^i (t, X(t-),u(t)) dH^i(t),  \\
X (0) = & \  x , \quad t \in [ 0, T ]
  \end{aligned}
  \right.
\end{eqnarray}
with the cost functional
\begin{eqnarray}\label{eq:4.2}
J ( u (\cdot) ) = {\mathbb E} \bigg [ \int_0^T l ( t, x (t), u (t) ) d t
+ \Phi ( x (T) ) \bigg ].
\end{eqnarray}
where the coefficients
satisfy the following
basic assumptions:
\begin{ass}\label{ass:2.5}
\begin{enumerate}
\item[]
\item[(i)]

$A:[0,T]\times \Omega \longrightarrow {\mathscr L} (V, V^*)$
and
$B: [0,T]\times \Omega \longrightarrow {\mathscr L} (V, H)$
are operator-valued stochastic processes satisfying (i) in Assumption \ref{ass:3.1};
\item[(iii)]$b, g: [ 0, T ] \times \Omega \times H \times {\mathscr U} \rightarrow H$  are $\mathscr P\times
   \mathscr B(H)\times \mathscr B(\mathscr U)/\mathscr B(H) $ measurable
   mappings and $\sigma=(\sigma)_{i=1}^\infty:[0,T]\times  \Omega \times H \times \mathscr U \longrightarrow l^2(H) $ is a $\mathscr P\times
   \mathscr B(H)\times \mathscr B(U)/\mathscr B(l^2(H))$-measurable mapping  such that $b ( \cdot, 0, 0 ), g ( \cdot, 0, 0 ) \in {
M}^2_{\mathscr F} ( 0, T; H ), \sigma(\cdot, 0,0) \in { M}_{\mathscr F}^2 ( 0, T; l^2(H) ).$ Moreover, for almost all $( t, \omega) \in [ 0, T ] \times \Omega $,  $b$, $g$ and $\sigma$ are  G\^ateaux differentiable in $(x,u)$ with  continuous bounded G\^ateaux  derivatives
$b_x, g_x,\sigma_x,  b_u, g_u$ and  $\sigma_u$;
\item[(iv)]
$l:[ 0, T ] \times \Omega \times H \times {\mathscr U} \rightarrow \mathbb R $ is a
 ${\mathscr P} \otimes {\mathscr B} (H) \otimes {\mathscr B} ({\mathscr U})/ {\mathscr B} ({\mathbb R})$-measurable mapping  and  $\Phi: \Omega \times H \rightarrow {\mathbb R}$
is  a ${\mathscr F}_T\otimes {\mathscr B} (H) / {\mathscr B} ({\mathbb R})$-measurable
mapping.
For almost all $( t, \omega ) \in [ 0, T ] \times \Omega$, $l$ is continuous G\^ateaux
differentiable in $(x,u)$
with continuous  G\^ateaux derivatives $l_x$ and $l_u$, and $\Phi$
is G\^ateaux differentiable in $x$
with  continuous
 G\^ateaux derivative $\Phi_x$.
Moreover, for almost all $( t, \omega ) \in [ 0, T ] \times \Omega$,   there exists a   constant $C > 0$ such that  for all $( x, u ) \in H  \times {\mathscr U}$
\begin{eqnarray*}
| l ( t, x, u  ) |
\leq  C ( 1 + \| x \|^2_H + + \| u \|_U^2 ) ,
\end{eqnarray*}
\begin{eqnarray*}
&& \| l_x ( t, x,u) \|_H +
+ \| l_u ( t, x, u ) \|_U \leq C ( 1 + \| x \|_H  + \| u \|_U  ) ,
\end{eqnarray*}
and
\begin{eqnarray*}
& | \Phi (x) | \leq C ( 1 + \| x \|^2_H) , \\
& \| \Phi_x (x) \|_H \leq C ( 1 + \| x \|_H).
\end{eqnarray*}
\end{enumerate}
\end{ass}

For any admissible control $u(\cdot),$ the solution of the system { \eqref{eq:4.1}},  denoted by $X^u(\cdot)$ or $X(\cdot),$
  if its dependence on 
   $u(\cdot)$  is clear from  the context,  is called the
state process corresponding to the control process $u(\cdot)$, and
 $(u(\cdot); X(\cdot))$ is called an
admissible pair.
The following result gives the well-posedness of the state equation as well as some useful estimates.

\begin{lem}\label{lem:1.1}
Let Assumption \ref{ass:2.5}  be satisfied. Then for any admissible control $u(\cdot)$,
the state equation
\eqref{eq:4.1}
has a unique solution $X^u(\cdot) \in M_{\mathscr{F}}^2(0,T;V).$
Moreover,  the following estimate holds

\begin{eqnarray}\label{eq:4.3}
\begin{split}
&\sup_{0\leq t\leq T}\mathbb E[ ||\hat X^u(t)||_H^2]+{\mathbb E} \bigg [ \int_0^T \| X^u (t) \|_V^2 d t \bigg ] \leq K \bigg \{ 1+||x||_H^2+ {\mathbb E}
\bigg [ \int_0^T \| u ( t) \|_U^2 d t \bigg ]\bigg \}
\end{split}
\end{eqnarray}
and
\begin{eqnarray}\label{eq:4.4}
\begin{split}
  |J( u(\cdot))|< \infty.
  \end{split}
\end{eqnarray}
Furthermore,  let $ X^v(\cdot)$  be
the state process corresponding to
another admissible control $v(\cdot),$
then

   \begin{eqnarray}\label{eq:4.5}
&&\sup_{0\leq t\leq T}\mathbb E[ ||\hat X^u(t)-X^v(t)||_H^2]+ {\mathbb E}
\bigg [ \int_0^T \| X^u (t) - { X}^v (t) \|_V^2 d t \bigg ] \leq K {\mathbb E} \bigg [ \int_0^T \| u(t) -v(t) \|_U^2 d t \bigg ] .
\end{eqnarray}
\end{lem}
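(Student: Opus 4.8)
The plan is to reduce everything to the coefficient-only results already established in Section~3. For a fixed admissible control $u(\cdot)$, I would freeze the control and regard the controlled equation \eqref{eq:4.1} as an instance of the uncontrolled SEE \eqref{eq:3.1} with random coefficients $b^u(t,x):=b(t,x,u(t))$, $g^u(t,x):=g(t,x,u(t))$ and $\sigma^u(t,x):=\sigma(t,x,u(t))$. The first task is to verify that these frozen coefficients satisfy Assumptions \ref{ass:3.1}--\ref{ass:3.2}. Assumption \ref{ass:3.1} concerns only $A$ and $B$ and is inherited directly from Assumption \ref{ass:2.5}(i). For Assumption \ref{ass:3.2}, measurability follows from the joint measurability in \ref{ass:2.5}(iii); the Lipschitz-in-$x$ bound follows from the boundedness of the G\^ateaux derivatives $b_x,g_x,\sigma_x$; and the integrability $b^u(\cdot,0),g^u(\cdot,0)\in M_{\mathscr F}^2(0,T;H)$, $\sigma^u(\cdot,0)\in l_{\mathscr F}^2(0,T;H)$ follows by writing $\|b(t,0,u(t))\|_H\le \|b(t,0,0)\|_H+C\|u(t)\|_U$ (using that $b_u$ is bounded, hence $b$ is Lipschitz in $u$) together with $b(\cdot,0,0)\in M_{\mathscr F}^2(0,T;H)$ and $u(\cdot)\in M_{\mathscr F}^2(0,T;U)$. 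Theorem \ref{thm:3.1} then yields the unique solution $X^u(\cdot)\in M_{\mathscr F}^2(0,T;V)$.

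For the a priori bound \eqref{eq:4.3}, I would apply the estimate \eqref{eq:3.4} of Theorem \ref{thm:3.2} to the frozen coefficients and then substitute the pointwise bounds $\|b(t,0,u(t))\|_H^2\le 2\|b(t,0,0)\|_H^2+2C^2\|u(t)\|_U^2$ and similarly for $g$ and $\sigma$. Taking expectations and absorbing the finite quantity $\mathbb{E}\int_0^T\|b(t,0,0)\|_H^2\,dt$ (and the analogous terms for $g,\sigma$) into the generic constant $K$ produces exactly the right-hand side $K\{1+\|x\|_H^2+\mathbb{E}\int_0^T\|u(t)\|_U^2\,dt\}$. The finiteness \eqref{eq:4.4} of the cost then follows by inserting \eqref{eq:4.3} into the quadratic growth bounds on $l$ and $\Phi$ from Assumption \ref{ass:2.5}(iv), using $\mathbb{E}\int_0^T\|X^u(t)\|_H^2\,dt\le T\sup_{t}\mathbb{E}\|X^u(t)\|_H^2<\infty$ and $\mathbb{E}\|X^u(T)\|_H^2<\infty$, together with $u(\cdot)\in M_{\mathscr F}^2(0,T;U)$.

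For the continuous dependence estimate \eqref{eq:4.5}, I would compare $X^u$ and $X^v$ as two solutions of \eqref{eq:3.1} sharing the same operators $A,B$ and the same initial datum $x$, but with coefficients $(b^u,g^u,\sigma^u)$ and $(b^v,g^v,\sigma^v)$ respectively. Applying the perturbation estimate \eqref{eq:3.5} with $\bar x=x$ (so the initial term vanishes), the right-hand side involves terms of the form $\mathbb{E}\int_0^T\|b(t,X^v(t),u(t))-b(t,X^v(t),v(t))\|_H^2\,dt$, and here the boundedness of $b_u$ (hence Lipschitz continuity of $b$ in the control variable) gives $\|b(t,X^v(t),u(t))-b(t,X^v(t),v(t))\|_H\le C\|u(t)-v(t)\|_U$, with the same treatment for $g$ and $\sigma$. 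This immediately yields the claimed bound $K\,\mathbb{E}\int_0^T\|u(t)-v(t)\|_U^2\,dt$.

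None of these steps presents a genuine obstacle: the lemma is essentially a corollary of Theorems \ref{thm:3.1}--\ref{thm:3.2}. The only point requiring care---and the one I would single out as the crux---is the bookkeeping that converts the \emph{control-independent} hypotheses on the G\^ateaux derivatives in Assumption \ref{ass:2.5} into the \emph{frozen-coefficient} Lipschitz and integrability conditions of Assumption \ref{ass:3.2}; in particular, checking that freezing a square-integrable control preserves membership in the spaces $M_{\mathscr F}^2(0,T;H)$ and $l_{\mathscr F}^2(0,T;H)$, which is precisely what makes the continuous-dependence machinery of Section~3 applicable here.
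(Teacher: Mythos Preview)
Your proposal is correct and follows essentially the same route as the paper: freeze the control, invoke Theorem~\ref{thm:3.1} for existence and uniqueness, and read off \eqref{eq:4.3} and \eqref{eq:4.5} from the estimates \eqref{eq:3.4} and \eqref{eq:3.5} of Theorem~\ref{thm:3.2}, with \eqref{eq:4.4} following from the quadratic growth bounds in Assumption~\ref{ass:2.5}(iv). If anything, your write-up is more explicit than the paper's own proof, which simply cites Theorems~\ref{thm:3.1}--\ref{thm:3.2} without spelling out the verification that the frozen coefficients $b^u,g^u,\sigma^u$ satisfy Assumption~\ref{ass:3.2}.
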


\begin{proof}
  Under Assumption \ref{ass:2.5}, by Theorem \ref{thm:3.1}, we  can get directly
  the existence and uniqueness of the  solution of the state equation \eqref{eq:3.1}.  And
  the estimates \eqref{eq:4.3} and \eqref{eq:4.5} can be obtained by
  the estimates \eqref{eq:3.4} and \eqref{eq:3.5}, respectively.
  Furthermor  from Assumption
   \ref{ass:2.5} and  the  estimate  \eqref{eq:4.3}, it follows that

   \begin{eqnarray}\label{eq:1.7}
     \begin{split}
       |J(u(\cdot))|\leq K \bigg\{\sup_{0\leq t\leq T}\mathbb E \bigg[|| X(t)||_H^2 \bigg] +\mathbb E\bigg[\int_0^T
  ||u(t)||^2_U dt\bigg]+1\bigg\} \leq K \bigg\{1+||x||_H^2+\mathbb E\bigg[\int_0^T
  ||u(t)||_U^2 dt\bigg]\bigg\}<\infty.
     \end{split}
   \end{eqnarray}
The proof is complete.
\end{proof}
Therefor  by Lemma \ref{lem:1.1}, we claim  that the cost functional \eqref{eq:4.2}
is well-defined.
Our optimal control problem can be stated as follows.
\begin{pro}\label{pro:2.1}
Find an admissible control process ${\bar u} (\cdot) \in {\cal A}$ such that
\begin{eqnarray}\label{eq:b8}
J ( {\bar u}(\cdot) ) = \inf_{u (\cdot) \in {\cal A}} J ( u (\cdot) ) .
\end{eqnarray}
\end{pro}
The admissible control ${\bar u} (\cdot)$ satisfying (\ref{eq:b8}) is called an optimal control process of
Problem \ref{pro:2.1}. Correspondingly, the state process ${\bar X} (\cdot)$ associated with ${\bar u} (\cdot)$
is called an optimal state process. Then $( {\bar u} (\cdot); {\bar X} (\cdot) )$ is called an optimal pair of
Problem \ref{pro:2.1}.

\section{Regularity Result for the Adjoint Equation}

For any admissible pair $( \bar u (\cdot); \bar X(\cdot) )$, the corresponding  adjoint
processes is defined as  
a triple $(\bar p(\cdot), \bar q(\cdot), \bar r(\cdot))$ of
 stochastic processes,  which is
a solution to
the following
backward stochastic evolution equation
(BSEE for short)  driven by Teugels martingales,  called the
adjoint equation,
\begin{eqnarray}\label{eq:4.4}
\begin{split}
   \left\{\begin{array}{ll}
d\bar p(t)=&-\bigg[A^*(t)\bar p(t)+b_x^*(t, \bar X(t), \bar u(t))\bar p(t)
+B^*(t)\bar q(t)+g_x^{*}(t, \bar X(t), \bar u(t))\bar q(t)
\\&\quad\quad+\displaystyle\su
\sigma{^i}_x^{*}(t, \bar
X(t),\bar u(t))\bar r^i(t)
+l_x(t, \bar X(t), \bar u(t))\bigg]dt
\\&\quad\quad+\bar q(t)dW(t)+\displaystyle\su\bar r^i(t)dH^i(t),
~~~~0\leqslant t\leqslant T,
\\ \bar p(T)=&\Phi_x(\bar X(T)).
  \end{array}
 \right.
 \end{split}
  \end{eqnarray}
Here $A^*$ denotes the adjoint operator of
the operator $A.$ Similarly, we can define
the corresponding adjoint operator for other coefficients.\\
Under Assumptions \ref{ass:2.5}, we have the following
basic result for the adjoint process.

\begin{lem}
 Let Assumptions \ref{ass:2.5}
 be satisfied. Then for any admissible pair
 $( \bar u (\cdot); \bar X (\cdot) ),$
  there exists a unique adjoint process
  $(\bar p(\cdot), \bar q(\cdot), \bar r(\cdot))\in M_{\mathscr{F}}^2(0,T;V)
  \times M_{\mathscr{F}}^2(0,T;H)\times M_{\mathscr{F}}^2(0,T;l^2(H)).$
  Moreover, the following estimate holds:

  \begin{eqnarray}
    \begin{split}
    &{\mathbb E} \bigg [ \int_0^T \|\bar p (t) \|_V^2 d t \bigg ] +{\mathbb E} \bigg [ \int_0^T \| \bar q (t) \|_H^2 d t \bigg ]
+ {\mathbb E} \bigg [ \int_0^T
 \| r (t ) \|_{l^2(H)}^2  d t \bigg ]
\leq  K\bigg\{  {\mathbb E} \bigg [ \int_0^T \| l_x (t,\bar X(t),\bar u(t)) \|_H^2 d t \bigg ]+
\mathbb E[||\Phi_x(\bar X(T))||_H^2]\bigg\}.
    \end{split}
  \end{eqnarray}
\end{lem}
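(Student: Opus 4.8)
The plan is to establish existence, uniqueness, and the a priori estimate for the adjoint BSEE \eqref{eq:4.4} by reducing it, via the standard time-reversal of the backward equation, to a setting where the continuous-dependence and solvability machinery already proved for the forward SEE applies. The adjoint equation is a \emph{backward} stochastic evolution equation driven by the Brownian motion $W$ and the Teugels martingales $\{H^i\}$, with terminal condition $\bar p(T)=\Phi_x(\bar X(T))$ and with the unknowns being the triple $(\bar p,\bar q,\bar r)$, where $\bar q$ and $\bar r=(\bar r^i)_{i=1}^\infty$ are the martingale-integrand parts to be determined so that $\bar p$ is adapted. The coefficients $b_x^*,g_x^*,(\sigma^i_x)^*,l_x,\Phi_x$ are bounded (by the boundedness of the G\^ateaux derivatives in Assumption \ref{ass:2.5}) and the driver is affine in $(\bar p,\bar q,\bar r)$, so the equation is a \emph{linear} BSEE in the Gelfand triple.

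\textbf{Key steps.} First I would set up the linear backward problem precisely: the driver
\[
f(t,p,q,r)=b_x^*(t)\,p+B^*(t)\,q+g_x^*(t)\,q+\su (\sigma^i_x)^*(t)\,r^i+l_x(t)
\]
is globally Lipschitz in $(p,q,r)$ with Lipschitz constant controlled by $C$ from Assumption \ref{ass:2.5}, and the inhomogeneous terms $l_x(t,\bar X(t),\bar u(t))$ and $\Phi_x(\bar X(T))$ lie in $M^2_{\mathscr F}(0,T;H)$ and $L^2(\Omega,\mathscr F_T,P;H)$ respectively, by the linear-growth bounds in Assumption \ref{ass:2.5} together with the forward estimate \eqref{eq:4.3}. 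Second, for existence and uniqueness I would run a contraction-mapping argument in the space $M^2_{\mathscr F}(0,T;V)\times M^2_{\mathscr F}(0,T;H)\times M^2_{\mathscr F}(0,T;l^2(H))$: freeze $(p,q,r)$ on the right-hand side, solve the resulting backward equation with a given driver by the martingale-representation theorem for $W$ and the Teugels martingales (Nualart--Schoutens \cite{NuSc}), and show the solution map is a contraction on a short time interval, after which one patches intervals to cover $[0,T]$; uniqueness on each piece follows from the It\^o formula in Lemma \ref{lem:c1}. Third, for the a priori estimate I would apply the It\^o formula of Lemma \ref{lem:c1} to $\|\bar p(t)\|_H^2$ over $[t,T]$, use the coercivity in Assumption \ref{ass:3.1} (now producing, after time reversal, the dissipative sign on $\langle A^*(s)\bar p(s),\bar p(s)\rangle$ together with the $\|B^*\bar q\|$ control), absorb the cross terms by Young's inequality $2ab\le \eps a^2+\eps^{-1}b^2$ to dominate the $\alpha\|\bar p\|_V^2$, $\|\bar q\|_H^2$, and $\|\bar r\|_{l^2(H)}^2$ contributions, take expectation to kill the $W$- and $H^i$-martingale integrals, and finish with Gr\"onwall's inequality exactly as in the proof of Theorem \ref{thm:3.2}.

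\textbf{Main obstacle.} The delicate point will be the energy estimate in the Gelfand-triple setting: unlike the forward case, the coercivity condition $-\langle A(t)x,x\rangle+\lambda\|x\|_H^2\ge \alpha\|x\|_V^2+\|Bx\|_H^2$ is stated for $A$, and one must verify that its adjoint form interacts correctly with the \emph{four} quadratic-variation contributions that It\^o's formula generates for the backward unknowns---the $\|\bar q\|_H^2$ term from the Brownian integral and the $\su\|\bar r^i\|_H^2$ term from the Teugels-martingale integral (using $\langle H^i,H^j\rangle=\delta_{ij}t$). These positive quadratic-variation terms appear with the ``wrong'' sign relative to the forward equation, so the estimate must show they are controlled by the coercivity margin $\alpha\|\bar p\|_V^2$ and the bounded coefficients after choosing the Young-inequality weights small enough; arranging this balancing so that $\mathbb E\int_t^T(\alpha\|\bar p\|_V^2+\|\bar q\|_H^2+\|\bar r\|_{l^2(H)}^2)\,ds$ is simultaneously bounded by the data is the crux. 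Once the coefficients $b_x^*,g_x^*,(\sigma^i_x)^*$ are used only through their uniform boundedness, the remaining manipulations are routine, and the stated estimate follows.
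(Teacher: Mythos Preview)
Your outline diverges from the paper's argument and contains a gap at the crucial step. The paper's proof is a two-line reduction: observe that $A^*$ and $B^*$ inherit the coercivity condition of Assumption~\ref{ass:3.1}, and then invoke the same Galerkin approximation plus parameter-extension scheme used for the forward equation in Theorem~\ref{thm:3.1} and Lemma~\ref{lem:3.3}. In other words, the paper reduces the infinite-dimensional BSEE to a sequence of finite-dimensional BSDEs driven by $W$ and the Teugels martingales (whose well-posedness is classical, cf.\ \cite{NuSc01,BEE03}), obtains uniform energy estimates, and passes to the weak limit exactly as in the proof of Lemma~\ref{lem:3.3}.

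Your proposal instead attempts a direct fixed-point argument in $M^2_{\mathscr F}(0,T;V)\times M^2_{\mathscr F}(0,T;H)\times M^2_{\mathscr F}(0,T;l^2(H))$, with the base case handled ``by the martingale-representation theorem.'' The gap is precisely here: once you freeze the Lipschitz part of the driver, the residual equation is still
\[
d p(t)=-\big[A^*(t)p(t)+f_0(t)\big]\,dt+q(t)\,dW(t)+\su r^i(t)\,dH^i(t),\qquad p(T)=\xi,
\]
with the \emph{unbounded} operator $A^*(t)$ present. Martingale representation alone does not solve this; it produces $q,r$ for an $H$-valued martingale, but it says nothing about the $V^*$-valued drift $A^*p$ or the $V$-regularity of $p$. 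You still need either a Galerkin step (as the paper does) or a semigroup/mild-solution argument (which, as the introduction explains, is unavailable here because $A(t)$ is random). Relatedly, your opening remark about ``standard time-reversal'' reducing the BSEE to the forward SEE machinery is misleading: reversing time destroys adaptedness of the martingale integrands $q,r$, so the backward problem is genuinely different from the forward one and cannot be obtained from Theorem~\ref{thm:3.1} by a change of variable.

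Your sketch of the a~priori estimate, by contrast, is sound and matches what the Galerkin route also requires: apply Lemma~\ref{lem:c1} to $\|\bar p(t)\|_H^2$ on $[t,T]$, use $\langle A^*\bar p,\bar p\rangle=\langle A\bar p,\bar p\rangle$ together with coercivity, absorb cross terms with Young's inequality, take expectations, and close with Gr\"onwall. The ``wrong sign'' issue you flag is real but resolves exactly as you say once the boundedness of $b_x^*,g_x^*,(\sigma^i_x)^*$ is used and the Young weights are chosen small.
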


\begin{proof}
From the property of adjoint operator,
 the adjoint operator $A^*$  of $A$  and the
  adjoint operator $B^*$ of $B$
 also  satisfies (i) in Assumption \ref{ass:3.1}. Therefore, similarly
 to Theorem \ref{thm:3.1}, the 
 existence and uniqueness of the 
 solution can be proved  by Galerkin approximations and
 parameter extension method.
\end{proof}
Define the Hamiltonian ${\cal H}: [ 0, T ] \times \Omega \times H  \times {\mathscr U} \times H \times H \times  l^2(H)
\rightarrow {\mathbb R}$ by
\begin{eqnarray}\label{eq:5.3}
{\cal H} ( t, x, u, p, q, r ) := \left ( b ( t, x, u ), p \right )_H
+\left( g ( t, x,  u), q \right)_H
+ \left( \sigma ( t,  x,  u), r(t) \right)_{l^2(H)}
+ l ( t, x, u ) .
\end{eqnarray}
Using Hamiltonian ${\cal H}$,
the adjoint equation \eqref{eq:4.4}
can be written in the following form:

\begin{eqnarray}\label{eq:5.4}
\begin{split}
   \left\{\begin{array}{ll}
d\bar p(t)=&-\bigg[A^*(t)\bar p(t)+B(t)^*\bar q(t)+\bar {\cal H}_{x} (t)\bigg]dt+\bar q(t)dW(t)+\displaystyle
\su\bar r^i(t)dH^i(t),
~~~~0\leqslant t\leqslant T,
\\ \bar p(T)=&\Phi_x( \bar X(T)),
  \end{array}
 \right.
 \end{split}
  \end{eqnarray}
where we denote
\begin{eqnarray}\label{eq:5.6}
\bar {\cal H} (t) \triangleq {\cal H} ( t,
\bar x (t), \bar u (t), \bar p (t),
\bar q (t),
\bar r(t) ).
\end{eqnarray}

\section{  Stochastic Maximum Principle}
\subsection {Variation of the State Process and Cost Functional}
Let$( {\bar u} (\cdot); {\bar X} (\cdot) )$ be an optimal pair of Problem
\ref{pro:2.1}.  Define  a
convex perturbation  of $\bar u(\cdot)$ as follows:
\begin{eqnarray*}
u^\epsilon ( \cdot ) \triangleq {\bar u} ( \cdot ) + \epsilon ( v ( \cdot ) - {\bar u} ( \cdot) ) ,
\quad 0 \leq \epsilon \leq 1,
\end{eqnarray*}
where $v(\cdot)$ is an arbitrarily
admissible control. Since the control domain $\mathscr U$ is
convex,  $u^\eps(\cdot)$ is also an element of
$\cal A.$
 We denote by $ X^\eps(\cdot)$  the state process
corresponding to the control $u^\eps(\cdot).$
Now we introduce the following  first order
variation equation:
\begin{eqnarray}\label{eq:5.8}
\left\{
\begin{aligned}
d Y (t) =& \ [ A (t) Y(t) + b_x ( t, \bar X(t),
\bar u(t) )Y(t) + b_u ( t,\bar X(t), \bar u(t) )(v(t)-\bar u(t)) ] d t
\\&+[B(t) Y(t)
+  g_x ( t,\bar X(t), \bar u(t))Y(t) + g_u ( t,
\bar X(t), \bar u(t) )(v(t)-\bar u(t))] d W (t)
\\& +\su\bigg[\sigma^i_x ( t, \bar X(t-), \bar u(t) )Y(t-)
 +\sigma^i_u ( t, \bar X(t-), \bar u(t) )(v(t)-\bar u(t))\bigg]
  dH^i(t), \\
Y(0) =& 0.
\end{aligned}
\right.
\end{eqnarray}
Under Assumption \ref{ass:2.5}, by
Theorem \ref{thm:3.1}, we see that
the variation equation
\eqref{eq:5.8} has a unique
solution  $Y(\cdot)\in M_{\mathscr{F}}^2(0,T;V).$

\begin{lem}\label{lem:6.1}
    Let  Assumption \ref{ass:2.5} be satisfied. Then we have the following estimates:
   \begin{eqnarray}\label{eq:6.2}
 \sup_{0 \leq t \leq T}{\mathbb E} \bigg [ \| X^\epsilon (t) - {\bar X} (t) \|^2_H \bigg ]+{\mathbb E} \bigg [ \int_0^T \| X^\epsilon (t) - {\bar X} (t) \|^2_V d t \bigg ] = O (\epsilon^2),
\end{eqnarray}

  \begin{eqnarray}\label{eq:5.5}
  \sup_{0 \leq t \leq T} {\mathbb E} \bigg [ \| X^\epsilon (t) - {\bar X} (t)-\eps Y(t) \|^2_H \bigg ]+
{\mathbb E} \bigg [ \int_0^T \| X^\epsilon (t) - {\bar X} (t)-\eps Y(t) \|^2_V d t \bigg ] = o (\epsilon^2) \ .
\end{eqnarray}
 \end{lem}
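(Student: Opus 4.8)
The plan is to establish the two estimates separately: \eqref{eq:6.2} follows almost immediately from the continuous–dependence bound already proved, while \eqref{eq:5.5} requires a linearization argument together with the It\^o formula and Gr\"onwall's inequality. For \eqref{eq:6.2} I would simply apply estimate \eqref{eq:4.5} of Lemma \ref{lem:1.1} to the two admissible controls $u^\epsilon(\cdot)$ and $\bar u(\cdot)$. Since $u^\epsilon(t)-\bar u(t)=\epsilon(v(t)-\bar u(t))$, we have $\mathbb E[\int_0^T\|u^\epsilon(t)-\bar u(t)\|_U^2\,dt]=\epsilon^2\,\mathbb E[\int_0^T\|v(t)-\bar u(t)\|_U^2\,dt]$, and the latter is finite because $v,\bar u\in\mathcal A$. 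Hence the left-hand side of \eqref{eq:6.2} is at most $K\epsilon^2\,\mathbb E[\int_0^T\|v-\bar u\|_U^2\,dt]=O(\epsilon^2)$.

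For \eqref{eq:5.5} I would set $Z^\epsilon(\cdot):=X^\epsilon(\cdot)-\bar X(\cdot)-\epsilon Y(\cdot)$ and identify the SEE it solves. For each coefficient $\phi\in\{b,g,\sigma\}$ write $\phi_x^{0}(t):=\phi_x(t,\bar X(t),\bar u(t))$ and $\phi_x^{\theta}(t):=\phi_x(t,\bar X+\theta(X^\epsilon-\bar X),\bar u+\theta(u^\epsilon-\bar u))$, and similarly for the $u$-derivatives. Using the fundamental theorem of calculus together with the identities $X^\epsilon-\bar X=Z^\epsilon+\epsilon Y$ and $u^\epsilon-\bar u=\epsilon(v-\bar u)$, one gets
\[
\phi(t,X^\epsilon,u^\epsilon)-\phi(t,\bar X,\bar u)-\epsilon\big[\phi_x^{0}Y+\phi_u^{0}(v-\bar u)\big]=\Big(\textstyle\int_0^1\phi_x^{\theta}\,d\theta\Big)Z^\epsilon+I_\phi^\epsilon,
\]
where $I_\phi^\epsilon:=\epsilon\int_0^1(\phi_x^{\theta}-\phi_x^{0})\,d\theta\,Y+\epsilon\int_0^1(\phi_u^{\theta}-\phi_u^{0})\,d\theta\,(v-\bar u)$. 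Subtracting the state equations \eqref{eq:4.1} for $X^\epsilon$ and $\bar X$ and the variation equation \eqref{eq:5.8} then exhibits the $Z^\epsilon$-dynamics as a SEE with the same principal operators $A,B$, bounded zeroth-order perturbations $\int_0^1\phi_x^\theta\,d\theta$, and forcing terms $I_b^\epsilon,I_g^\epsilon,I_\sigma^\epsilon$, with $Z^\epsilon(0)=0$.

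Applying the It\^o formula of Lemma \ref{lem:c1} to $\|Z^\epsilon(t)\|_H^2$, I would use the coercivity condition in Assumption \ref{ass:3.1} to absorb the $A$- and $B$-contributions into $-\alpha\|Z^\epsilon\|_V^2-\|BZ^\epsilon\|_H^2$, bound the bounded-derivative perturbation terms by $K\|Z^\epsilon\|_H^2$ via Young's inequality, and take expectation; the latter annihilates the $dW$- and $dH^i$-martingale integrals and, since $\langle H^i,H^j\rangle=\delta_{ij}t$, collapses the Teugels quadratic-variation double sum into $\mathbb E\int_0^t\|\cdot\|_{l^2(H)}^2\,ds$. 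This yields
\[
\mathbb E[\|Z^\epsilon(t)\|_H^2]+\alpha\,\mathbb E\Big[\textstyle\int_0^t\|Z^\epsilon\|_V^2\,ds\Big]\leq K\,\mathbb E\Big[\textstyle\int_0^t\|Z^\epsilon\|_H^2\,ds\Big]+K R^\epsilon,
\]
where $R^\epsilon:=\mathbb E[\int_0^T(\|I_b^\epsilon\|_H^2+\|I_g^\epsilon\|_H^2+\|I_\sigma^\epsilon\|_{l^2(H)}^2)\,ds]$. Gr\"onwall's inequality then gives $\sup_{t}\mathbb E[\|Z^\epsilon(t)\|_H^2]+\mathbb E[\int_0^T\|Z^\epsilon\|_V^2\,ds]\leq K R^\epsilon$, so \eqref{eq:5.5} reduces to showing $R^\epsilon=o(\epsilon^2)$.

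The main obstacle is precisely this last point. Factoring $R^\epsilon=\epsilon^2\tilde R^\epsilon$, each integrand of $\tilde R^\epsilon$ has the form $\|\int_0^1(\phi_x^{\theta}-\phi_x^{0})\,d\theta\,Y+\int_0^1(\phi_u^{\theta}-\phi_u^{0})\,d\theta\,(v-\bar u)\|^2$, which tends to $0$ for a.e. $(t,\omega)$ by the continuity of the G\^ateaux derivatives in Assumption \ref{ass:2.5}, once $X^\epsilon\to\bar X$ and $u^\epsilon\to\bar u$, and is dominated by $C(\|Y\|_H^2+\|v-\bar u\|_U^2)\in L^1([0,T]\times\Omega)$ thanks to the boundedness of those derivatives. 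The delicate part is that \eqref{eq:6.2} only provides $X^\epsilon\to\bar X$ in $M^2_{\mathscr F}(0,T;H)$, hence a.e.\ convergence merely along a subsequence; I would therefore close the argument by the standard subsequence device: every sequence $\epsilon_n\to0$ admits a sub-subsequence along which $X^{\epsilon_n}\to\bar X$ a.e., whence dominated convergence forces $\tilde R^{\epsilon_n}\to0$ along it, and since the limit is independent of the subsequence, $\tilde R^\epsilon\to0$ for the full family. This gives $R^\epsilon=o(\epsilon^2)$ and completes \eqref{eq:5.5}.
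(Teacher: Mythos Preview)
Your proof is correct and follows essentially the same route as the paper: estimate \eqref{eq:6.2} via the continuous-dependence bound \eqref{eq:4.5}, then derive the SEE for the remainder process, apply coercivity/It\^o/Gr\"onwall, and conclude $o(\epsilon^2)$ for the forcing terms by dominated convergence. Your version is in fact slightly cleaner than the paper's: by taking $\int_0^1\phi_x^\theta\,d\theta$ (rather than $\phi_x^0$) as the principal zeroth-order coefficient, your remainder $I_\phi^\epsilon$ carries an explicit factor $\epsilon$, which makes the $o(\epsilon^2)$ claim transparent, and you also make explicit the subsequence argument needed to pass from $L^2$-convergence of $X^\epsilon$ to the pointwise convergence required for dominated convergence---a point the paper leaves implicit.
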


 \begin{proof}
   From the estimate \eqref{eq:4.5},
   we have

   \begin{eqnarray}
   \begin{split}
   \sup_{0 \leq t \leq T} {\mathbb E} \bigg [ \| X^\epsilon (t) - {\bar X} (t)) \|^2_H \bigg ]+
{\mathbb E}
\bigg [ \int_0^T \| X^\eps (t) - { \bar X} (t) \|_V^2 d t \bigg ]
 &\leq K {\mathbb E} \bigg [ \int_0^T \| u^\eps(t) -\bar u(t) \|_U^2 d t \bigg ]
 \\&=K\eps^2 {\mathbb E} \bigg [ \int_0^T \| v(t) -\bar u(t) \|_U^2 d t \bigg]
 \\&=O(\eps^2).
 \end{split}.
\end{eqnarray}
 Denote
 \begin{eqnarray}
   \Xi^\eps(t):= X^\eps(t)-\bar X(t)-\eps Y(t).
 \end{eqnarray}
From   Taylor expanding, we have
\begin{eqnarray} \label{eq:6.5}
  \left\{
  \begin{aligned}
   d \Xi^\eps (t) = & \ [ A (t) \Xi^\eps (t)
   +b_x(t, \bar X(t), \bar u(t))\Xi^\eps (t)
   + \alpha^\eps(t)]dt
   + [B(t)\Xi^\eps(t)+g_x(t, \bar X(t), \bar u(t))\Xi^\eps (t)
   + \beta^\eps(t) ]d W(t)\\
   &+  \su \bigg[\sigma^i_x(t,  \bar X(t-), \bar u(t))\Xi^\eps (t)
   + \gamma^{i\eps}(t ) \bigg]dH^i(t),  \\
X (0) = & \  x , \quad t \in [ 0, T ],
  \end{aligned}
  \right.
\end{eqnarray}
 where
 \begin{eqnarray} \label{eq:5.16}
\left\{
\begin{aligned}
\alpha^\eps(t)= &\int_0^1\bigg[\big(b_x( t, \bar X (t)+\lambda(X^\eps(t)-\bar X(t)), \bar u (t)+\lambda(u^\eps(t)-\bar u(t)))-b_x(t, \bar X(t), \bar u(t))\big)(X^\eps(t)-\bar X(t))
   \\&+\big(b_u( t,\bar X (t)+\lambda(X^\eps(t)-\bar X(t)),\bar u (t)+\lambda(u^\eps(t)-\bar u(t)))-b_u(t, \bar X(t), \bar u(t))\big)(u^\eps(t)-\bar u(t))\bigg]d\lambda,
   \\
   \beta^\eps(t)= &\int_0^1\bigg[\big(g_x( t, \bar X (t)+\lambda(X^\eps(t)-\bar X(t)), \bar u (t)+\lambda(u^\eps(t)- \bar u(t)))-g_x(t, \bar X(t), \bar u(t))\big)(X^\eps(t)-\bar X(t))
   \\&+\big(g_u( t,\bar X (t)+\lambda(X^\eps(t)-\bar X(t)),\bar u (t)+\lambda(u^\eps(t)-
   \bar u(t)))-g_u(t, \bar X(t), \bar u(t))\big)(u^\eps(t)-\bar u(t))\bigg]d\lambda,\\
   \gamma^{i\eps}(t )= &\int_0^1\bigg[\big(\sigma^i_x( t,   \bar X (t-)+\lambda(X^\eps(t-)-\bar X(t-)), \bar u (t)+\lambda(u^\eps(t)-\bar u(t))-
   \sigma^i_x(t,  \bar X(t-), \bar u(t))\big)(X^\eps(t-)-\bar X(t-))
   \\&+\big(\sigma^i_u( t, \bar X (t-)+\lambda(X^\eps(t-)-\bar X(t-)),\bar u (t)+\lambda(u^\eps(t)-\bar u(t)))-
   \sigma^i_x(t,  \bar X(t-), \bar u(t))\big)(u^\eps(t)-\bar u(t))\bigg]d\lambda.
\end{aligned}
\right.
\end{eqnarray}
 From the estimates \eqref{eq:3.4},
 \eqref{eq:6.2} and  Lebesgue dominated convergence theorem, we get that

 \begin{eqnarray}
&& \sup_{0 \leq t \leq T} {\mathbb E} \bigg [ \| \Xi (t) \|^2_H \bigg ]+{\mathbb E} \bigg [ \int_0^T \| \Xi(t) \|^2_V d t \bigg ]\nonumber
\leq K\bigg\{ \mathbb E\bigg[ \int_0^T || \alpha^\eps(t)||^2_Hdt  \bigg]
+\mathbb E\bigg[\int_0^T | |\beta^\eps(t)||^2_Hdt\bigg]
 +\mathbb E\bigg[ \int_0^T
 ||\gamma^\eps(t )||^2_{l^2(H)}dt \bigg]
 \bigg\}=o(\eps).
\end{eqnarray}
The proof is complete.
\end{proof}

 \begin{lem} \label{lem:6.2}
    Let  Assumption \ref{ass:2.5} be satisfied.                Let $( {\bar u} (\cdot); {\bar X} (\cdot) )$ be
 an optimal pair of
 Problem \ref{pro:2.1} associated with
  the first order variation process $Y(\cdot)$ (see \eqref{eq:5.8}). Then,

  \begin{eqnarray}\label{eq:6.9}
  \begin{split}
    J(u^\eps(\cdot))-J(\bar u(\cdot))=&
    \eps\mathbb E\bigg[(\Phi_x( \bar X(T)), Y(T))_H\bigg]
      + \eps\mathbb E\bigg[\int_0^T
      ( l_x(t,\bar X(t), \bar u(t) ), Y(t))_Hdt\bigg]
      \\&+ \eps\mathbb E\bigg[\int_0^T
      ( l_u(t, \bar X(t), \bar u(t)), v(t)-u(t))_Udt\bigg]+o(\eps).
  \end{split}
\end{eqnarray}
 \end{lem}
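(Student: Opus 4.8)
The plan is to start directly from the definition \eqref{eq:4.2} of the cost functional and write the increment as a running part plus a terminal part,
\begin{equation*}
J(u^\epsilon(\cdot)) - J(\bar u(\cdot)) = \mathbb E\bigg[\int_0^T \big(l(t, X^\epsilon(t), u^\epsilon(t)) - l(t, \bar X(t), \bar u(t))\big)\,dt\bigg] + \mathbb E\big[\Phi(X^\epsilon(T)) - \Phi(\bar X(T))\big],
\end{equation*}
and then expand both parts to first order using the G\^ateaux differentiability granted by Assumption \ref{ass:2.5}. For the running cost I would use the integral mean value identity
\begin{equation*}
l(t, X^\epsilon, u^\epsilon) - l(t, \bar X, \bar u) = \int_0^1 \Big[\big(l_x(t, \bar X + \lambda(X^\epsilon - \bar X), \bar u + \lambda(u^\epsilon - \bar u)), X^\epsilon - \bar X\big)_H + \big(l_u(\cdots), u^\epsilon - \bar u\big)_U\Big]\,d\lambda,
\end{equation*}
and an entirely analogous expansion of $\Phi(X^\epsilon(T)) - \Phi(\bar X(T))$ in terms of $\Phi_x$.

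The next step is to extract the leading-order terms by freezing the derivatives at the nominal point $(\bar X, \bar u)$ (respectively $\bar X(T)$). Since $u^\epsilon - \bar u = \epsilon(v - \bar u)$ exactly, the $l_u$-integrand yields the claimed $\epsilon\,\mathbb E[\int_0^T (l_u(t, \bar X, \bar u), v - \bar u)_U\,dt]$ term once its argument is frozen. For the $l_x$ and $\Phi_x$ integrands I would substitute $X^\epsilon - \bar X = \epsilon Y + \Xi^\epsilon$ with $\Xi^\epsilon \triangleq X^\epsilon - \bar X - \epsilon Y$; the $\epsilon Y$ pieces, after freezing, reproduce $\epsilon\,\mathbb E[\int_0^T (l_x, Y)_H\,dt]$ and $\epsilon\,\mathbb E[(\Phi_x(\bar X(T)), Y(T))_H]$, while the $\Xi^\epsilon$ pieces are $o(\epsilon)$: by Cauchy--Schwarz they are bounded by the $L^2$-norm of the (linearly growing, hence square-integrable by Assumption \ref{ass:2.5} and the estimate \eqref{eq:4.3}) derivative times $\|\Xi^\epsilon\|$, and the latter is $o(\epsilon)$ by the $o(\epsilon^2)$ estimate \eqref{eq:5.5} of Lemma \ref{lem:6.1}.

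The main work, and the principal obstacle, lies in controlling the three remainder terms generated by the freezing of the derivatives, namely
\begin{equation*}
l_x(t, \bar X + \lambda(X^\epsilon - \bar X), \bar u + \lambda(u^\epsilon - \bar u)) - l_x(t, \bar X, \bar u)
\end{equation*}
together with its $l_u$ and $\Phi_x$ analogues, each paired against an increment that is $O(\epsilon)$ in $L^2$. For every such term I would again split off the increment factor by Cauchy--Schwarz, so that it suffices to show the $L^2$-norm of each derivative difference tends to $0$ as $\epsilon \to 0$. This final step rests on the continuity of $l_x, l_u, \Phi_x$ in $(x,u)$ combined with the convergences $X^\epsilon \to \bar X$ in $M^2_{\mathscr F}(0,T;H)$ and $u^\epsilon \to \bar u$ in $M^2_{\mathscr F}(0,T;U)$ supplied by the $O(\epsilon^2)$ estimate of Lemma \ref{lem:6.1}: passing to an a.e.-convergent subsequence and invoking the linear-growth bounds of Assumption \ref{ass:2.5} to secure uniform integrability, the dominated convergence theorem forces each derivative difference to vanish in $L^2$. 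Collecting the leading terms and absorbing every $o(\epsilon)$ contribution then yields the expansion \eqref{eq:6.9}.
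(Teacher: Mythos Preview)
Your proposal is correct and follows essentially the same route as the paper: split $J(u^\epsilon)-J(\bar u)$ into running and terminal parts, apply the integral mean-value expansion, freeze the derivatives at $(\bar X,\bar u)$, decompose $X^\epsilon-\bar X=\epsilon Y+\Xi^\epsilon$, and control the remainders via Lemma \ref{lem:6.1} and dominated convergence. If anything, your treatment of the remainder terms (Cauchy--Schwarz, subsequence a.e.\ convergence, linear-growth domination) is spelled out more carefully than the paper's, which simply invokes ``Lemma \ref{lem:6.1} and the dominated convergence theorem'' without further detail.
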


\begin{proof}

From the definition of the cost functional (see \eqref{eq:4.2}), we have
\begin{eqnarray} \label{eq:6.10}
J(u^\eps(\cdot))- J(\bar u(\cdot))
={\mathbb E} \bigg [ \int_0^T
\Big(l ( t, X^\eps (t), u^\eps (t) ) - l ( t, \bar X(t), \bar u (t) \Big) d t\bigg ]
+ \mathbb E\bigg[ \Phi ( X^\eps (T) )
-\Phi ( \bar X (T) ) \bigg ]=:I_1+I_2{\color{blue},}
 \end{eqnarray}
where
\begin{eqnarray}
  \begin{split}
    I_1=&{\mathbb E} \bigg [ \int_0^T
\Big(l ( t, X^\eps (t), u^\eps (t) ) - l ( t, \bar X(t), \bar u (t) \Big) d t\bigg ]
,
  \\  I_2=&\mathbb E\bigg[ \Phi ( X^\eps (T) )
-\Phi ( \bar X (T) ) \bigg ].
  \end{split}
\end{eqnarray}
Let us concentrate on $I_1.$
In terms of Taylor expanding, Lemma \ref{lem:6.1} and
the control convergence theorem, we have
\begin{eqnarray} \label{eq:6.11}
  \begin{split}
    I_1= &\mathbb E\bigg[\int_0^T\int_0^1\big(l_x( t, \bar X (t)+\lambda(X^\eps(t)-\bar X(t)), \bar u (t)+\lambda(u^\eps(t)-\bar u(t)))-l_x(t, \bar X(t), \bar u(t))\big)(X^\eps(t)-\bar X(t))d\lambda dt\bigg]
   \\&+ E\bigg[\int_0^T\int_0^1\big(l_u( t,\bar X (t)+\lambda(X^\eps(t)-\bar X(t)),\bar u (t)+\lambda(u^\eps(t)-\bar u(t)))-l_u(t, \bar X(t), \bar u(t))\big)(u^\eps(t)-\bar u(t)) d\lambda dt\bigg]
   \\&+\mathbb E\bigg[\int_0^Tl_x(t, \bar X(t), \bar u(t))\big)\Xi^\eps(t)dt\bigg]
   +\eps \mathbb E\bigg[\int_0^Tl_x(t, \bar X(t), \bar u(t))\big)Y(t) dt\bigg]
   \\&+ \eps E\bigg[\int_0^T l_u(t, \bar X(t), \bar u(t))\big)(u(t)-\bar u(t)) dt\bigg]
  \\ =& \eps \mathbb E\bigg[\int_0^Tl_x(t, \bar X(t), \bar u(t))Y(t) dt\bigg]+ \eps E\bigg[\int_0^T l_u(t, \bar X(t), \bar u(t))(u(t)-\bar u(t)) dt\bigg]
   \\&+o(\eps).
  \end{split}
\end{eqnarray}

Similarly,  we have

\begin{eqnarray} \label{eq:6.12}
  \begin{split}
    I_2= \eps \mathbb E\bigg[\Phi _x(\bar X(T))Y(T)\bigg]+o(\eps).
  \end{split}
\end{eqnarray}

Then putting \eqref{eq:6.11} and
\eqref{eq:6.12} into \eqref{eq:6.10},
we get \eqref{eq:6.9}. The proof is complete.

\end{proof}

\subsection{Main Results}
Now we are in position to  state and prove the maximum principle for  Problem {\ref{pro:2.1}}.
\begin{thm}[{\bf Maximum Principle}]
\label{thm:4.3}
Let  Assumption \ref{ass:2.5} be satisfied.
Let $( {\bar u} (\cdot); {\bar X} (\cdot) )$ be an optimal pair of
Problem \ref{pro:2.1} associated with
the adjoint processes $( {\bar  p} (\cdot), {
\bar q} (\cdot), \bar r(\cdot) ).$
Then the following minimum condition holds:
\begin{eqnarray}\label{eq:4.9}
\big( {\cal H}_u (t,\bar X(t-), \bar u(t),\bar p(t-), \bar q(t),
\bar r(t)), v - {\bar u} (t) \big)_U \geq 0 ,\quad \quad\forall v \in {\mathscr U}, for~ a.e.~t \in [ 0, T ], {\mathbb P}-a.s.
\end{eqnarray}
\end{thm}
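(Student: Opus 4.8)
The plan is to exploit the optimality of $\bar u(\cdot)$ together with the first-order expansion of Lemma \ref{lem:6.2}, and then to remove the terms containing the variation process $Y(\cdot)$ by a duality computation against the adjoint triple. First, since $(\bar u(\cdot);\bar X(\cdot))$ is optimal we have $J(u^\eps(\cdot))-J(\bar u(\cdot))\ge 0$ for all $\eps\in[0,1]$; dividing the identity \eqref{eq:6.9} by $\eps$ and letting $\eps\downarrow 0$ gives the variational inequality
\begin{eqnarray*}
&&\mathbb{E}\big[(\Phi_x(\bar X(T)),Y(T))_H\big]+\mathbb{E}\bigg[\int_0^T(l_x(t,\bar X(t),\bar u(t)),Y(t))_H\,dt\bigg]\\
&&\qquad+\mathbb{E}\bigg[\int_0^T(l_u(t,\bar X(t),\bar u(t)),v(t)-\bar u(t))_U\,dt\bigg]\ge 0
\end{eqnarray*}
for every admissible $v(\cdot)$.

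The heart of the argument is the second step. I would apply the It\^o formula of Lemma \ref{lem:c1} in product form to $(\bar p(t),Y(t))_H$, obtaining this product rule by polarizing the norm identity \eqref{eq:2.1} (that is, applying \eqref{eq:2.1} to the $V$-valued processes $\bar p\pm Y$ and subtracting), where $Y(\cdot)$ solves \eqref{eq:5.8} and $(\bar p,\bar q,\bar r)$ solves \eqref{eq:5.4}. Integrating over $[0,T]$ and taking expectation kills the $W$- and $H^i$-stochastic integrals and replaces each jump bracket by its compensator $d\la H^i,H^j\ra=\delta_{ij}\,dt$, producing $\mathbb{E}[(\bar p(T),Y(T))_H]=\mathbb{E}[(\Phi_x(\bar X(T)),Y(T))_H]$ on the left. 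The decisive algebraic fact is that every contribution in which $\bar p,\bar q$ or $\bar r$ is paired against a $Y$-coefficient cancels against its adjoint partner, namely $-\la A^*\bar p,Y\ra+\la\bar p,AY\ra=0$, $-(b_x^*\bar p,Y)_H+(\bar p,b_xY)_H=0$, $-(B^*\bar q,Y)_H+(\bar q,BY)_H=0$, and likewise for the $g_x$- and $\sigma^i_x$-terms. What remains is
\begin{eqnarray*}
&&\mathbb{E}\big[(\Phi_x(\bar X(T)),Y(T))_H\big]+\mathbb{E}\bigg[\int_0^T(l_x(t,\bar X(t),\bar u(t)),Y(t))_H\,dt\bigg]\\
&&\qquad=\mathbb{E}\bigg[\int_0^T\Big[(\bar p,b_u(v-\bar u))_H+(\bar q,g_u(v-\bar u))_H+\su(\bar r^i,\sigma^i_u(v-\bar u))_H\Big]dt\bigg],
\end{eqnarray*}
and by the definition \eqref{eq:5.3} of the Hamiltonian the integrand on the right equals $(\bar{\cal H}_u(t),v(t)-\bar u(t))_U-(l_u(t,\bar X(t),\bar u(t)),v(t)-\bar u(t))_U$, where $\bar{\cal H}_u(t)$ abbreviates ${\cal H}_u$ evaluated along the optimal data. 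Substituting into the variational inequality, the two $l_u$-terms cancel and I obtain
\[
\mathbb{E}\bigg[\int_0^T\big(\bar{\cal H}_u(t),\,v(t)-\bar u(t)\big)_U\,dt\bigg]\ge 0\qquad\text{for all }v(\cdot)\in{\cal A}.
\]

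Finally I would localize this integrated inequality to the pointwise statement \eqref{eq:4.9}. For a fixed deterministic $w\in\mathscr U$ and an arbitrary predictable set $\Lambda\in\mathscr P$, the process $v(t,\omega)=w\,I_\Lambda(t,\omega)+\bar u(t,\omega)\,I_{\Lambda^c}(t,\omega)$ is admissible by convexity of $\mathscr U$, and inserting it above yields $\mathbb{E}[\int_0^T I_\Lambda\,(\bar{\cal H}_u(t),w-\bar u(t))_U\,dt]\ge 0$; since $\Lambda$ is arbitrary this forces $(\bar{\cal H}_u(t),w-\bar u(t))_U\ge 0$ for a.e.\ $(t,\omega)$, and running $w$ over a countable dense subset of $\mathscr U$ and using the continuity of ${\cal H}_u$ in $u$ upgrades the inequality to all $v\in\mathscr U$ simultaneously, which is \eqref{eq:4.9}. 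I expect the product-form It\^o computation to be the main obstacle: one must justify the $V$/$V^*$ duality pairing between the forward process $Y$ and the backward adjoint $\bar p$ in the Gelfand-triple sense and correctly handle the jump cross-variation terms through the compensator $\delta_{ij}\,dt$, while the integrability needed to discard the martingale parts follows from the a priori estimates of Lemma \ref{lem:1.1} and the adjoint estimate.
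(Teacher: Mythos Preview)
Your proposal is correct and follows essentially the same route as the paper: apply It\^o's formula to $(\bar p(t),Y(t))_H$ to obtain the duality relation \eqref{eq:6.14}, combine it with the first-order expansion of Lemma \ref{lem:6.2} and the optimality of $\bar u(\cdot)$ to get the integrated inequality, and then pass to the pointwise minimum condition. Your write-up is in fact more detailed than the paper's on two points---the polarization argument behind the product It\^o formula and the explicit localization via $v=wI_\Lambda+\bar uI_{\Lambda^c}$---whereas the paper simply asserts the last step by the arbitrariness of $v(\cdot)$.
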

\begin{proof}
Recalling the adjoint equation
\eqref{eq:5.4}
 and the first order variational
 equation \eqref{eq:5.8}, and then applying It\^o formula to
     $(\bar p(t), Y(t))_H,$  we have
  \begin{eqnarray} \label{eq:6.14}
    \begin{split}
        &\mathbb E[(\Phi_x( \bar X(T)),
         Y(T))_H ]
      + \mathbb E\bigg[\int_0^T
      ( l_x(t,\bar X(t), \bar u(t) ), Y(t))_Hdt\bigg]
      \\=& \mathbb E\bigg[\int_0^T
      \bigg(v(t)-\bar u(t),  b_u^*(t,\bar X(t), \bar u(t))\bar p(t)
      + g_u^*(t,\bar X(t), \bar u(t))\bar q(t)
      +
      \su\sigma{^i}_u^*(t, \bar X(t), \bar u(t))\bar r^i(t ) \bigg)_Udt\bigg].
    \end{split}
  \end{eqnarray}

Since $\bar u(\cdot) $ is the optimal
control,
from \eqref{eq:6.9}, the duality relation
\eqref{eq:6.14} and the definition of the
Hamiltonian ${\cal H}$ (see \eqref{eq:5.3})
, we have
\begin{eqnarray}\label{eq:5.6}
  \begin{split}
  0\leq&   \lim_{\eps\longrightarrow 0^{+}}\frac {J(u^\eps(\cdot))-J(\bar u(\cdot))}{\eps}
 \\ =&\mathbb E[\big(\Phi_x( \bar X(T)),
         Y(T)\big)_H ]
      + \mathbb E\bigg[\int_0^T
      ( l_x(t,\bar X(t), \bar u(t) ), Y(t))_Hdt\bigg]+ \mathbb E\bigg[\int_0^T
      ( l_u(t, \bar X(t), \bar u(t)), v(t)-u(t))_Udt\bigg]
      \\=&\mathbb E\bigg[\int_0^T
      \bigg(v(t)-\bar u(t),  b_u^*(t,\bar X(t), \bar u(t))\bar p(t)
      + g_u^*(t,\bar X(t), \bar u(t))\bar q(t)
      +
      \su\sigma{^i}_u^*(t, \bar X(t), \bar u(t))\bar r^i(t ) \bigg)_Udt\bigg]
      \\&+\mathbb E\bigg[\int_0^T
      ( l_u(t, \bar X(t), \bar u(t)), v(t)-\bar u(t))_Udt\bigg]
      \\=&E\bigg[\int_0^T
      \bigg(v(t)-\bar u(t), {\cal H}_u (t,\bar X(t), \bar u(t),\bar p(t), \bar q(t),
\bar r(t)) )_Udt\bigg].
  \end{split}
\end{eqnarray}
This  implies the minimum condition
\eqref{eq:4.9} holds  since $v(\cdot)$
is any given admissible control.
  \end{proof}

\section{Verification  Theorem }

In the following, we give  a sufficient condition of optimality for the existence of an optimal control of
Problem \ref{pro:2.1},
which is the so-called verification theorem.

\begin{thm}[{\bf Verification  Theorem}]\label{thm:4.4}
Let  Assumption \ref{ass:2.5} be satisfied.
Let $( {\bar u} (\cdot); {\bar X} (\cdot) )$ be an admissible pair of
Problem \ref{pro:2.1} associated with
the adjoint processes $( \bar p(\cdot),
\bar q (\cdot ), \bar r(\cdot)).$ Suppose that
${\cal H} ( t, x, u, { \bar p} (t), {\bar q} (t), \bar r(t) )$ is convex in $( x, u)$,
and $\Phi (x)$ is convex in $x$, moreover assume
that  the following  optimality condition holds for almost all $( t, \omega ) \in [ 0, T ] \times \Omega$:
\begin{eqnarray} \label{eq:4.10}
 {\cal H} ( t, {\bar X} (t),
\bar u(t), { \bar p} (t), {\bar q} (t), \bar r(t)) = \min_{ u \in {\mathscr U} } {\cal H} ( t,  {\bar X} (t),
u, { \bar p} (t), {\bar q} (t), \bar r(t)).
\end{eqnarray}
 Then $( {\bar u} (\cdot); {\bar X} (\cdot) )$
is an optimal pair of Problem \ref{pro:2.1}.
\end{thm}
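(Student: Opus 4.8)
The plan is to establish the inequality $J(u(\cdot)) \geq J(\bar u(\cdot))$ for an arbitrary admissible control $u(\cdot)$ with corresponding state $X(\cdot)$, which by Problem \ref{pro:2.1} is exactly the assertion of optimality. Writing $\tilde X(t) := X(t) - \bar X(t)$ (so $\tilde X(0)=0$, since both trajectories start from the common datum $x$), and denoting the coefficient increments $\Delta b(t) := b(t,X(t),u(t)) - b(t,\bar X(t),\bar u(t))$ and likewise $\Delta g,\Delta\sigma^i$, I would first split the cost difference as $J(u)-J(\bar u) = \mathbb E\int_0^T [l(t,X,u) - l(t,\bar X,\bar u)]\,dt + \mathbb E[\Phi(X(T)) - \Phi(\bar X(T))]$. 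The terminal term is handled immediately by the convexity of $\Phi$, giving $\mathbb E[\Phi(X(T)) - \Phi(\bar X(T))] \geq \mathbb E[(\Phi_x(\bar X(T)), \tilde X(T))_H] = \mathbb E[(\bar p(T), \tilde X(T))_H]$, where the last equality uses the terminal condition of the adjoint equation \eqref{eq:5.4}.

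The central step is to apply the Itô product formula — the bilinear form of Lemma \ref{lem:c1}, obtained by polarization — to $(\bar p(t), \tilde X(t))_H$, using that $\bar p$ solves the adjoint BSEE \eqref{eq:5.4} and $\tilde X$ solves the SEE with drift $A\tilde X + \Delta b$, diffusion $B\tilde X + \Delta g$ and jump coefficients $\Delta\sigma^i$. Taking expectations annihilates the stochastic integrals (their martingale property and integrability follow from Lemma \ref{lem:1.1} and the adjoint estimate); the covariation contributions yield $(\bar q, B\tilde X + \Delta g)_H + \su(\bar r^i, \Delta\sigma^i)_H$, using $\langle H^i, H^j\rangle = \delta_{ij}t$; and the adjoint identities $\langle A^*\bar p, \tilde X\rangle = \langle A\tilde X, \bar p\rangle$ and $\langle B^*\bar q, \tilde X\rangle = (\bar q, B\tilde X)_H$ cancel the unbounded-operator terms. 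The outcome is $\mathbb E[(\bar p(T), \tilde X(T))_H] = \mathbb E\int_0^T [-(\bar{\mathcal H}_x, \tilde X)_H + (\bar p, \Delta b)_H + (\bar q, \Delta g)_H + \su(\bar r^i, \Delta\sigma^i)_H]\,dt$, where I write $\bar{\mathcal H}(t) = \mathcal H(t,\bar X(t),\bar u(t),\bar p(t),\bar q(t),\bar r(t))$ and $\bar{\mathcal H}_x, \bar{\mathcal H}_u$ for its partial gradients at the optimal point.

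Substituting the terminal bound into the cost difference and inspecting the definition \eqref{eq:5.3} of the Hamiltonian, I would recognize that $(\bar p, \Delta b)_H + (\bar q, \Delta g)_H + \su(\bar r^i, \Delta\sigma^i)_H + l(t,X,u) - l(t,\bar X,\bar u)$ is precisely $\mathcal H(t,X,u,\bar p,\bar q,\bar r) - \bar{\mathcal H}(t)$, which reduces the estimate to $J(u)-J(\bar u) \geq \mathbb E\int_0^T [\mathcal H(t,X,u,\bar p,\bar q,\bar r) - \bar{\mathcal H}(t) - (\bar{\mathcal H}_x, \tilde X)_H]\,dt$. At this point the convexity of $(x,u)\mapsto \mathcal H(t,x,u,\bar p,\bar q,\bar r)$ supplies the subgradient inequality $\mathcal H(t,X,u,\bar p,\bar q,\bar r) - \bar{\mathcal H}(t) \geq (\bar{\mathcal H}_x, \tilde X)_H + (\bar{\mathcal H}_u, u-\bar u)_U$, so the integrand is bounded below by $(\bar{\mathcal H}_u, u(t)-\bar u(t))_U$. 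Finally, since $\bar u(t)$ minimizes $\mathcal H(t,\bar X(t),\cdot,\bar p,\bar q,\bar r)$ over the convex set $\mathscr U$ by \eqref{eq:4.10}, the first-order condition gives $(\bar{\mathcal H}_u, v-\bar u(t))_U \geq 0$ for every $v\in\mathscr U$; taking $v=u(t)$ shows the integrand is nonnegative, whence $J(u)\geq J(\bar u)$, as required.

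I expect the main obstacle to be the rigorous justification of the bilinear Itô formula for $(\bar p, \tilde X)_H$ in the Gelfand-triple setting, since both factors are only $V$-valued with $V^*$-valued drifts and carry jump (Teugels) martingale parts: one must verify that all duality pairings are well defined, that the accompanying stochastic integrals are genuine martingales of vanishing expectation, and that the $[H^i,H^j]$ covariations collapse in expectation to $\delta_{ij}\,dt$. Once this identity is in place, the convexity inequalities for $\Phi$ and $\mathcal H$ and the variational characterization of the minimizer over $\mathscr U$ are routine.
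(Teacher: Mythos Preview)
Your proposal is correct and follows essentially the same route as the paper: both arguments apply the bilinear It\^o formula to $(\bar p(t), X(t)-\bar X(t))_H$ to produce the duality relation, rewrite the cost difference via the Hamiltonian \eqref{eq:5.3}, and then invoke convexity of $\Phi$ and of $\mathcal H$ in $(x,u)$ together with the first-order optimality condition on $\mathscr U$ to conclude $J(u)\geq J(\bar u)$. The only cosmetic difference is that you apply the convexity bound for $\Phi$ at the outset (turning the cost difference into an inequality before invoking It\^o), whereas the paper keeps the identity form through \eqref{eq:6.6} and applies both convexity inequalities at the end; the substance is identical.
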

\begin{proof}
Let  $( {u} (\cdot); {X} (\cdot) )$  be an any   given admissible pair. To simplify our notations, we define
\begin{eqnarray} \label{eq:6.1}
\begin{split}
& b (t) \triangleq b ( t, X (t), u (t)), {\bar b} (t) \triangleq b ( t, {\bar X} (t), {\bar u} (t)), \\
& g (t) \triangleq g ( t, X (t), u (t)), {\bar g} (t) \triangleq g ( t, {\bar X} (t) ,{\bar u} (t)),
\\
& \sigma{^i} (t ) \triangleq \sigma{^i} ( t,  X (t-), u (t)), {\bar \sigma{^i}} (t) \triangleq \sigma{^i} ( t,
 {\bar X} (t-) ,{\bar u} (t)),
\\
& {\cal H} (t) \triangleq {\cal H} ( t, X (t), u (t), {\bar p} (t), {\bar q} (t), \bar r(t) ),
\\
& { \bar{\cal H}} (t) \triangleq {\cal H} ( t,
\bar X (t), \bar u (t), {\bar p} (t), {\bar q} (t), \bar r(t) ) .
\end{split}
\end{eqnarray}
From the definitions of the cost functional
  $J(u(\cdot))$ and the Hamiltonian  ${\cal H}$ (see \eqref{eq:4.2} and \eqref{eq:5.3}),
  we can represent $J(u(\cdot))-J(\bar u(\cdot))$ as follows:
\begin{eqnarray}\label{eq:7.3}
J ( u (\cdot) ) - J ( {\bar u} (\cdot) )
&=& {\mathbb E} \bigg [ \int_0^T \bigg ( {\cal H} (t) - {\bar {\cal H}} (t)
- ( {\bar p} (t), b(t) - {\bar b} (t) )_H \nonumber - ( {\bar q} (t), g (t) - {\bar g} (t) )_H
\\&&- \su( {\bar r}^i (t ), \sigma{^i} (t ) - {\bar \sigma{^i}} (t ) )_H    \bigg ) d t \bigg ]+ {\mathbb E} \bigg[ \Phi ( X (T))
- \Phi ( {\bar X} (T) )\bigg] .
\end{eqnarray}

Then recalling the
 adjoint equation \eqref{eq:4.4}  and applying  It\^o's formula to $( {\bar p} (t), X (t) - {\bar X} (t) )_H$, we  get that
\begin{eqnarray}\label{eq:4.12}
&&{\mathbb E}\bigg [ \int_0^T  \bigg (( { \bar p} (t), b (t) - {\bar b} (t) )_H
+ ( {\bar q} (t), g (t) - {\bar g} (t) )_H
+ \su( { \bar r}^i (t ), \sigma{^i} (t ) - {\bar \sigma{^i}} (t ) )_H
\bigg ) d t \bigg ] \nonumber \\
~~~~~~~~~&=& {\mathbb E} \bigg [ \int_0^T ( {\bar {\cal H}}_x (t) , X (t) - {\bar X} (t) )_H d t \bigg ]+ {\mathbb E} \Big[ (\Phi_{x} ( \bar X  (T)), X (T) - {\bar X} (T) )_H \Big] .
\end{eqnarray}
Then substituting \eqref{eq:4.12} into \eqref{eq:7.3} leads to

\begin{eqnarray}\label{eq:6.6}
J ( u (\cdot) ) - J ( {\bar u} (\cdot) )
&=& {\mathbb E} \bigg [ \int_0^T \bigg
( {\cal H} (t) - {\bar {\cal H}} (t)
- ( {\bar {\cal H}}_x (t),
 X(t) - {\bar X} (t) )_H \bigg )d t \bigg ] \nonumber \\
&& + {\mathbb E} [ \Phi ( X (T) ) - \Phi ( {\bar X} (T) )
- ( \Phi_x ( {\bar X} (T) ), X(T) - {\bar x} (T) )_H ] .
\end{eqnarray}
On the other hand,   the convexity of
${\cal H}(t)$ and $\Phi (x)$ yields
\begin{eqnarray}\label{eq:7.7}
{\cal H} (t) - {\bar {\cal H}} (t) &\geq& ( {\bar {\cal H}}_x (t), X (t) - {\bar X} (t) )_H
+( {\bar {\cal H}}_{u} (t), u ( t ) - {\bar u} ( t) )_U ,
\end{eqnarray}
and
\begin{eqnarray}\label{eq:7.8}
\Phi ( X (T) ) - \Phi ( {\bar X} (T) ) \geq ( \Phi_x ( {\bar X} (T) ), x (T) - {\bar x} (T) )_H .
\end{eqnarray}
In addition,  the optimality condition \eqref{eq:4.10}  and the convex optimization principle
(see Proposition 2.21 of \cite{ET1976} ) yield  that  for almost all $( t, \omega ) \in
[ 0, T ] \times \Omega$,
\begin{eqnarray}\label{eq:5.5}
( {\bar {\cal H}}_u (t), u (t) - {\bar u} (t) )_U\geq 0 .
\end{eqnarray}
Then putting \eqref{eq:7.7}, \eqref{eq:7.8} and \eqref{eq:5.5} into \eqref{eq:6.6}, we get that
\begin{eqnarray}
 J ( u (\cdot) ) - J ( {\bar u} (\cdot) )\geq 0.
\end{eqnarray}
Therefore,  since $u(\cdot)$ is arbitrary,  ${\bar u} (\cdot)$ is an optimal control process and $( {\bar u} (\cdot); {\bar X} (\cdot) )$ is an optimal pair.  The proof is
complete.
\end{proof}
\section{Application}
In this section, we will apply our theoretical results to solve a specific
exampl  i.e., an
optimal control problem for a controlled  Cauchy problem
driven by Teugels martingales.

First of all,
let us recall some preliminaries of Sobolev spaces.
For $m = 0, 1$, we define the space $H^m \triangleq \{ \phi:
\partial_z^\alpha \phi \in L^2 ( {\mathbb R}^d ), \ \mbox {for any}
\ \alpha: =( \alpha_1, \cdots, \alpha_d ) \ \mbox {with} \ |\alpha|
:= | \alpha_1 | + \cdots + | \alpha_d | \leq m \}$ with the norm
\begin{eqnarray*}
\| \phi \|_m \triangleq \left \{ \sum_{ |\alpha| \leq m } \int_{{\mathbb R}^d}
| \partial_z^\alpha \phi (z) |^2 d z \right \}^{\frac{1}{2}} .
\end{eqnarray*}
We denote by $H^{-1}$ the dual space of $H^1$. We set $V = H^1$, $H
= H^0$, $V^* = H^{-1}$. Then $( V, H, V^* )$ is a Gelfand triple.

  We choose  control domain ${\mathscr U} = U = H$. The admissible control
set $\cal A$  is defined as  $ { M}^2_{\mathscr F} ( 0, T; U ).$
 For any admissible control $u(\cdot, \cdot)
 \in { M}^2_{\mathscr F} ( 0, T; U ),$
  we consider a controlled  Cauchy problem, where the system
is given by a stochastic partial differential equation driven by
Brownian motion $W$
and Poisson random
martingale $ $ in  the following
divergence form:
\begin{eqnarray}\label{eq:6.13}
\left\{
\begin{aligned}
d y ( t,z ) = & \ \big \{ \partial_{z^i} [ a^{ij} ( t, z )
\partial_{z^j} y ( t, z ) ]
+ b^i ( t, z ) \partial_{z^i} y ( t, z )+ c ( t, z ) y ( t, z ) + u ( t, z ) \big \} d t \\
& + \{\partial_{z^i} [\eta^i( t, z )y ( t, z )]  +\rho ( t, z ) y ( t, z )+
u ( t,z) \} d W (t)
+  \su[  \Gamma^i ( t,  z ) y ( t, z )+
u ( t, z ) ] dH^i(t) ,\\
y ( 0, z ) = & \ \xi(z)\in \mathbb R^d{\color{blue},}\quad ( t, z ) \in [ 0, T ] \times {\mathbb R}^d,
\end{aligned}
\right.
\end{eqnarray}
 where the coefficients $a^{ij}, b^i, \eta^i, c,\rho:
[ 0, T ]\times \Omega\times \mathbb R^d \rightarrow \mathbb R$ and $\Gamma^i:
[ 0, T ]\times \Omega \times \mathbb R^d $  are given  random mappings
and satisfy the suitable measurability.
Here we also use the Einstein summation convention to
$\partial_{z^i} [ a^{ij} ( t, z )
\partial_{z^j} y ( t, z ) ], b^i ( t, z ) \partial_{z^i} y ( t, z )$
and $\partial_{z^i} [\eta^i( t, z )y ( t, z )].$

For any admissible  control $u(\cdot, \cdot)\in \cal A$, the following
definition gives the generalized weak solution to  \eqref{eq:6.13}.

\begin{defn}
An $\mathbb R$-valued, ${\mathscr P} \times {\mathscr B} (\mathbb R^d
)$-measurable process $y (\cdot, \cdot)$ is called a solution to
\eqref{eq:6.13}, if $y (\cdot, \cdot) \in {\cal M}_{\cal F}^2 ( 0, T;
H)$  such that for every $\phi \in H$ and a.e. $( t,
\omega) \in [ 0, T ] \times \Omega $, it holds that
\begin{eqnarray}
\begin{split}
\int_{\mathbb R^d} y (t,z) \phi(z)dz  =& \ \int_{\mathbb R^d}\xi (  z )\phi(z)dz -
\int_0^t \int_{\mathbb R^d} a^{ij} ( s, z )
\partial_{z^j} y ( s, z ) \partial_{z^i}\phi(z) dz d s
+\int_0^t \int_{\mathbb R^d} \bigg [ b^i ( s, z ) \partial_{z^i} y ( s, z ) \\
& + c ( s, z ) y ( s, z ) + u(s, z) \bigg ] \phi(z) dz d s -\int_0^t \int_{\mathbb R^d}  \eta^i ( s, z ) y ( s, z )\partial_{i}\phi(z)dzd W (s)
\\&+ \int_0^t \int_{\mathbb R^d}
\bigg[ \rho ( s, z ) y ( s, z )+
u ( s, z )\bigg] \phi(z) dzd W (s) + \su\int_0^t\int_{\mathbb R^d} 
\bigg[  \Gamma^i ( s,  z ) y ( s, z )+
u ( s, z ) \bigg]\phi(z)dzdH^i(t).
\end{split}
\end{eqnarray}
\end{defn}

For any admissible control process $u (\cdot, \cdot)$ and the solution $y (\cdot, \cdot)$ of the
corresponding state equation \eqref{eq:6.13}, the objective of the control problem is to minimize the following cost functional
\begin{eqnarray}\label{eq:8.2}
 J ( u (\cdot) )={\mathbb E}\bigg [ \int_{{\mathbb R}^d} y^2 ( T, z ) d z + \iint_{[
0, T ] \times {{\mathbb R}^d}} y^2 ( s, z ) d s  d z + \iint_{[ 0, T
] \times {{\mathbb R}^d}} u^2 ( s, z ) d s d z \bigg ] .
\end{eqnarray}

To make the control problem well-defined,
we make the following assumptions
on the coefficients $a$, $b$, $c$, $\eta$, $\rho$, $\Gamma$, for some fixed constants $K \in ( 1,
\infty )$ and $\kappa \in ( 0,1 )$:

\begin{ass}\label{ass:6.3}
The functions $a$, $b$, $c$, $\eta$, and $\rho$  are
${\mathscr P} \times {\mathscr B} ({\mathbb R}^d )$-measurable with
values in the set of real symmetric
$d \times d$ matrices,  ${\mathbb
R}^{d}$, ${\mathbb R}$, ${\mathbb R}^d$
and  ${\mathbb R}$, respectively, and are bounded by $K$.
The function $\Gamma$ is
${\mathscr P}
\times {\mathscr B} (E)\times {\mathscr B} ({\mathbb R}^d )$-measurable with
value ${l^2(\mathbb
R)}$ and is bounded by $K$. $\xi\in L^2 ( {\mathbb R}^d).$
\end{ass}

\begin{ass}\label{ass:6.4}
The super-parabolic condition holds, i.e.,
\begin{eqnarray*}
\kappa I+\eta( t, z )(\eta( t, z ))^*\leq 2 a ( t, \omega, z ) \leq K I , \quad \forall ( t, \omega, z ) \in
[ 0, T ] \times \Omega \times {\mathbb R}^d ,
\end{eqnarray*}
where $I$ is the $(d \times d)$-identity matrix.
\end{ass}
In order to  apply our abstract theoretical results in Section 6 and 7 to our optimal control problem, now we  begin to transform
\eqref{eq:6.13} into a SEE  driven by 
Teugels martingales in
the form of \eqref{eq:3.1}.
Set
\begin{eqnarray*}
&&X(t)\triangleq y(t, \cdot),
\\&& (A (t) \phi) (z) \triangleq  \partial_{z^i} [ a^{ij} ( t, z )
\partial_{z^j} \phi (z) ]
+b^i ( t, z ) \partial_{z^i} \phi (z)+c ( t, z ) \phi (z)  , \quad \forall \phi \in V , \\
&&( B (t) \phi) (z) \triangleq \partial_{z^i}[\eta^i (t,z)\phi (z)]+\rho ( t, z ) \phi ( z ) ,\quad \forall \phi \in V,
\\&& b(t,\phi, u)\triangleq u, \quad \forall \phi \in H, u\in {\cal U},
\\&& g(t,\phi, u)\triangleq u, \quad \forall \phi \in H, u\in {\cal U},
\\&& \sigma{^i}(t, \phi, u)\triangleq\Gamma^i ( t ) \phi+u,  \quad\forall \phi \in H, u\in \cal U,
 \\&& l(t,\phi, u)\triangleq  (\phi, \phi)_H+
 (u, u)_{U},  \quad\forall \phi \in H, u\in \cal U,
 \\&& \Phi(\phi)\triangleq (\phi, \phi)_H, \quad\forall \phi \in H.
\end{eqnarray*}

In the Gelfand triple $( V, H, V^* )$, using the above notations, we can rewrite the state equation \eqref{eq:6.13} as follows:
\begin{eqnarray} \label{eq:8.3}
  \left\{
  \begin{aligned}
   d X (t) = & \ [ A (t) X (t) + b ( t, X (t), u(t)) ] d t
+ [B(t)X(t)+g( t, X (t), u(t)) ]d W(t)
 \\&\quad + \su\sigma{^i} (t, X(t),u(t))dH^i(t),  \\
X (0) = & \  x , \quad t \in [ 0, T ],
  \end{aligned}
  \right.
\end{eqnarray}
and the cost functional \eqref{eq:8.2}
can be  rewritten as
\begin{eqnarray}\label{eq:8.4}
J ( u (\cdot) ) = {\mathbb E} \bigg [ \int_0^T l ( t, x (t), u (t) ) d t
+ \Phi ( x (T) ) \bigg ]{\color{blue},}
\end{eqnarray}
where we set

\begin{eqnarray}
\begin{split}
  &l(t,x,u)\triangleq(x, x)_H+(u, u)_H, \forall x\in H, u\in U,
\\&
\Phi(x)\triangleq(x,x)_H, \forall x\in H.
\end{split}
\end{eqnarray}
Thus this optimal control problem is  transformed  into Problem
\ref{pro:2.1} as a special case.
Under Assumptions \ref{ass:6.3}-\ref{ass:6.4},
 it  is easy to check that the coefficients of  this optimal control problem  satisfy
Assumptions \ref{ass:2.5}.  So  in this
cas   Theorem \ref{thm:4.3} and
\ref{thm:4.4} hold.
Moreover, from
the a priori estimate \eqref{eq:3.5},
it is easy to see that the cost functional  $J(u(\cdot))$ is
the strictly convex, coerciv  lower-semi continuous functional defined
on the reflexive Banach space
${ M}^2_{\mathscr F} ( 0, T; U ).$
Therefor  the uniqueness and existence of the optimal control
can be  obtained by the convex optimality
 principle (see Proposition 2.12 of \cite{ET1976}).
 Let $(\bar u(\cdot), \bar X(\cdot))$
 be the optimal pair. In the following, we will give the
 duality characterization of the optimal
 control $\bar u(\cdot)$ by the maximum
 principle. More precisely, in this case the corresponding Hamiltonian ${\cal H}$ becomes
\begin{eqnarray}\label{eq4.2}
{\cal H} ( t, x, u, p, q, r ) := \left ( u, p \right )_H
+\left(u , q \right)_H
+ \left( \Gamma ( t) x+u, r(t ) \right)_{l^2(H)}
+ (x,x)_H+(u,u)_H .
\end{eqnarray}
 Let $( \bar u (\cdot); \bar X (\cdot) )$
 be an  optimal pair. Then
 the corresponding adjoint equation
becomes
\begin{eqnarray}\label{eq:8.6}
\begin{split}
   \left\{\begin{array}{ll}
d\bar p(t)=&-\bigg[A^*(t)\bar p(t)
+B^*(t)\bar q(t)+\displaystyle
\su
\Gamma^{i*}(t )\bar r^i(t)
+ 2 \bar X(t)\bigg]dt
+\bar q(t)dW(t)+\displaystyle \su\bar r^i(t)dH^i(t),0\leqslant t\leqslant T,
\\ \bar p(T)=&2\bar X(T),
  \end{array}
 \right.
 \end{split}
  \end{eqnarray}
where
\begin{eqnarray*}
 &&A^*(t) \phi (z) \triangleq - \partial_{z^i} [ a^{ij}
( t, z ) \partial_{z^j} \phi (z) ] + \partial_{z^i}[b^i ( t, z )
\phi (z)] + c ( t, z ) \phi (z) , \quad \forall \phi \in V,\\
&&B^*(t) \phi (z) \triangleq   -\eta^i (t,z)\partial_{z^i}\phi (z),  \quad \forall  \phi \in H,\\
&&\Gamma^{i*}(t ) \phi (z) \triangleq   \Gamma^i(t, z)\phi (z),  \quad \forall  \phi \in H.
\end{eqnarray*}
Since ${\mathscr U} = U$, there is no constraint on the control and  therefore  the minimum condition \eqref{eq:4.9}
becomes
\begin{eqnarray}
{\cal H}_u (t,\bar X(t-), \bar u(t),
\bar p(t-), \bar q(t),
\bar r(t))= 0,
\end{eqnarray}
which imply that
\begin{eqnarray}\label{eq6.4}
&& 2 {\bar u} (t) + \bar  { p} (t-) + \bar q (t)
+  \bar r(t ) = 0,
\end{eqnarray}a.e. $t \in [ 0, T ]$, ${\mathbb P}$-a.s..
 Thus  the optimal control ${\bar u}
(\cdot)$ is given by
\begin{eqnarray*}
{\bar u} (t) = -\frac{1}{2} \bigg [
\bar { p} (t-) + \bar { q} (t)+  \bar r(t ) \bigg] .
\end{eqnarray*}

\begin{rmk}
  The above example
  can be regarded as
  a special case of  the  infinite-dimensional
linear-quadratic control problem 
driven by Teugels  martiangles
which can also be  applied to
some more practical problems
such as the partial observation optimal
control driven by  Teugels martingales and the optimal harvesting problem 
 associated with L\'{e}vy processes and so on. And we will  give
 detailed investigations on these applications in our future
   publication.

\end{rmk}

\section{Conclusion}

In this paper, we have developed an infinite-dimensional optimal control problem of the
stochastic  evolution system 
driven by Teugels martingales.
We have considered the control variable enters the diffusion of the state equation and  the control domain is  convex.
We first provided the existence  uniqueness and continuous dependence theorems of solutions to SEE driven by
Teugels martingales.
Then we established necessary and sufficient conditions for optimal controls in the form of maximum principles by convex variational technique.
As an application, we considered an optimal control problem of a Cauchy problem for a controlled stochastic partial differential equation and obtained the dual characterization of the optimal control in terms of
the solution to the corresponding stochastic Hamiltonian system.
And further investigates will be carried out on the optimal control problem under convex control domain assumption and
more  practical applications in our future
publications  

\bibliographystyle{amsplain}

\vspace{1mm}

\end{document}